\def\@cite#1#2{[{{\bfseries #1}\if@tempswa , #2\fi}]}
\renewcommand{\section}{%
\@startsection{section}{1}{\z@}
{0.5truecm plus -1ex minus -.2ex}%
{1.0ex plus .2ex}{\bfseries\large}}
\def\@seccntformat#1{\csname the#1\endcsname.\ }
\numberwithin{equation}{section} 
\theoremstyle{theorem}
\newtheorem{thm}{Theorem}[section]
\newtheorem{lem}[thm]{Lemma}
\theoremstyle{definition}
\newtheorem{df}{Definition}[section]
\newtheorem{remark}{Remark}[section]
\newtheorem*{prth1.1}{Proof of Theorem 1.1}
\newtheorem*{prth1.2}{Proof of Theorem 1.2}
\newtheorem*{prth1.3}{Proof of Theorem 1.3}
\newtheorem*{prth1.4}{Proof of Theorem 1.4}
\newcommand{\ep}{\varepsilon}
\let\widehat\widehat
\def\Pi{\widehat\pi}
\begin{document}
\footnote[0]
    {2010 {\it Mathematics Subject Classification}\/: 
    35G30, 80A22, 35A40.      
    }
\footnote[0] 
    {{\it Key words and phrases}\/: 
    singular nonlocal phase field systems; inertial terms; 
    existence; approximation and time discretization.       
} 
\begin{center}
    \Large{{\bf Existence for a singular nonlocal phase field system \\ 
            with inertial term}}
\end{center}
\vspace{5pt}
\begin{center}
    Shunsuke Kurima%
    \\
    \vspace{2pt}
    Department of Mathematics, 
    Tokyo University of Science\\
    1-3, Kagurazaka, Shinjuku-ku, Tokyo 162-8601, Japan\\
    {\tt shunsuke.kurima@gmail.com}\\
    \vspace{2pt}
\end{center}
\begin{center}    
    \small \today
\end{center}

\vspace{2pt}
\newenvironment{summary}
{\vspace{.5\baselineskip}\begin{list}{}{%
     \setlength{\baselineskip}{0.85\baselineskip}
     \setlength{\topsep}{0pt}
     \setlength{\leftmargin}{12mm}
     \setlength{\rightmargin}{12mm}
     \setlength{\listparindent}{0mm}
     \setlength{\itemindent}{\listparindent}
     \setlength{\parsep}{0pt}
     \item\relax}}{\end{list}\vspace{.5\baselineskip}}
\begin{summary}
{\footnotesize {\bf Abstract.} 
In this paper we deal with a singular nonlocal phase field system with inertial term.  
The system has the logarithm of the absolute temperature $\theta$ 
under time derivative.   
Although the system has a difficult mathematical point  
caused by the combination of $(\ln \theta)_{t}$, 
the inertial term and the nonlocal diffusion term for the order parameter $\varphi$      
(see Section \ref{Sec1.1}), 
we can establish existence of solutions by a key estimate 
(see Remark \ref{Keypoints}).  
}
\end{summary}
\vspace{10pt}

\newpage

\section{Introduction}\label{Sec1}

\subsection{Previous works}\label{Sec1.1}

The phase field system 
\begin{equation}\label{E}\tag{E}
\begin{cases}
(\alpha(\theta))_{t} + \ell\varphi_{t} - \eta\Delta \theta = f  
&\mbox{in}\ \Omega \times (0, T), 
\\[2.5mm] 
\zeta\varphi_{tt} + \varphi_t + A\varphi 
                                           + \beta(\varphi) + \pi(\varphi) = \ell\theta
&\mbox{in}\ \Omega \times (0, T) 
\end{cases}
\end{equation}
has been studied, 
where $\Omega \subset \mathbb{R}^d$ ($d= 1, 2, 3$) is a bounded domain, 
$T>0$, $\ell, \eta >0$, $\zeta \in \{0, 1\}$, 
$\alpha : D(\alpha) \subset \mathbb{R} \to \mathbb{R}$ 
is a single-valued maximal monotone function, 
$A : D(A) \subset L^2(\Omega) \to L^2(\Omega)$ is an operator, 
$\beta : \mathbb{R} \to \mathbb{R}$ is a single-valued maximal monotone function, 
$\pi : \mathbb{R} \to \mathbb{R}$ is an anti-monotone function, 
and $f : \Omega\times(0, T) \to \mathbb{R}$ is a given function.  
In particular, the following four case were studied:  
\begin{enumerate}
\setlength{\itemsep}{2mm}
\item $\zeta = 0$, $\alpha(\theta) = \theta$, $A\varphi = -\Delta\varphi$  
     \   (see e.g., \cite{EllZheng, CC2012, M2015, MN2018, CK1}).    
\item $\zeta = 0$, 
$\alpha(\theta) = \ln \theta$, $A\varphi = -\Delta \varphi$ 
\ (see e.g., \cite{C2018, CC2018}).      
\item $\zeta = 1$, $\alpha(\theta) = \theta$, $A\varphi = -\Delta \varphi$ 
\ (see e.g., \cite{WGZ2007, WGZ2007dynamicalBD, K4}).    
\item $\zeta = 1$, 
$\alpha(\theta) = \theta$, $A\varphi = a(\cdot)\varphi - J\ast\varphi$ 
\ (see e.g., \cite{GPS, K7}).   
\end{enumerate}
Here $J : \mathbb{R}^d \to \mathbb{R}$ is an interaction kernel, 
$a(x) := \int_{\Omega} J(x-y)\,dy$ and  
$(J\ast\varphi)(x) := \int_{\Omega} J(x-y)\varphi(y)\,dy$ 
for $x \in \Omega$.  
However, in the case that $\zeta = 1$, 
$\alpha(\theta) = \ln \theta$, $A\varphi = a(\cdot)\varphi - J\ast\varphi$,  
the system \eqref{E} seems to be not studied yet. 
In Cases 3 and 4 
to establish estimates for $\beta(\varphi)$ is more difficult 
compared to Cases 1 and 2 by the inertial term $\varphi_{tt}$.   
In Case 3, 
assuming that $|\beta''(r)| \leq C_{\beta}(1 + |r|)$ for all $r \in \mathbb{R}$, 
where $C_{\beta} > 0$ is some constant, we can obtain an estimate for $\beta(\varphi)$ 
by deriving the $L^{\infty}(0, T; H^1(\Omega))$-estimate for $\varphi$ 
and by the continuity of the embedding $H^1(\Omega) \hookrightarrow L^6(\Omega)$. 
On the other hand, 
in Case 4, 
since the regularity of $\varphi$ is lower 
compared to the case that $A\varphi = -\Delta \varphi$, 
it seems to be difficult to obtain estimates for $\beta(\varphi)$ 
in the same way as in Case 3. 
In Case 4, 
assuming that $\varphi_{0}, v_{0} \in L^{\infty}(\Omega)$,  
we can derive   
the $L^{\infty}(\Omega\times(0, T))$-estimate for $\varphi$ 
by establishing the $L^2(0, T; H^2(\Omega))$-estimate for $\theta$ 
and by the continuity of 
the embedding $H^2(\Omega) \hookrightarrow L^{\infty}(\Omega)$, 
and hence we can obtain an estimate for $\beta(\varphi)$. 
However, in the case that $\zeta = 1$, 
$\alpha(\theta) = \ln \theta$, $A\varphi = a(\cdot)\varphi - J\ast\varphi$, 
since the regularity of $\theta$ is lower  
compared to the case that $\alpha(\theta) = \theta$, 
it seems to be difficult to derive the $L^2(0, T; H^2(\Omega))$-estimate for $\theta$ 
in the same way as in Case 4. 
In this paper we try to solve the mathematical problem  
caused by the combination of $(\ln \theta)_{t}$, $\varphi_{tt}$ 
and $a(\cdot)\varphi - J\ast\varphi$ (see Remark \ref{Keypoints}).

\subsection{Main problem}

In this paper we consider the singular nonlocal phase field system with inertial term
\begin{equation}\label{P}\tag{P}
\begin{cases}
(\ln \theta)_{t} + \ell\varphi_{t} - \eta\Delta \theta = f  
&\mbox{in}\ \Omega \times (0, T), 
\\[1mm] 
\varphi_{tt} + \varphi_t + a(\cdot)\varphi - J\ast\varphi 
                                           + \beta(\varphi) + \pi(\varphi) = \ell\theta
&\mbox{in}\ \Omega \times (0, T), 
\\[2mm]
\partial_\nu \theta = 0 
&\mbox{on}\ \partial\Omega \times (0, T), 
\\[1mm] 
(\ln \theta)(0) = \ln \theta_0,\ \varphi(0) = \varphi_0,\ \varphi_{t}(0) = v_0 
&\mbox{in}\ \Omega,     
\end{cases}
\end{equation}
where $\Omega \subset \mathbb{R}^d$ ($d= 1, 2, 3$) is a bounded domain 
with smooth boundary $\partial\Omega$, 
$\partial_\nu$ denotes differentiation with respect to
the outward normal of $\partial\Omega$, 
$\theta_{0} : \Omega \to \mathbb{R}$, 
$\varphi_{0} : \Omega \to \mathbb{R}$ and 
$v_{0} : \Omega \to \mathbb{R}$   
are given functions. 
Moreover,  
we assume the four conditions:  
\begin{enumerate} 
\setlength{\itemsep}{0mm}
\item[(C1)] 
$J(-x) = J(x)$ for all $x \in \mathbb{R}^d$ 
and $\displaystyle\sup_{x \in \Omega} \int_{\Omega} |J(x-y)|\,dy < + \infty$. 
\item[(C2)] $\beta : \mathbb{R} \to \mathbb{R}$                                
is a single-valued maximal monotone function 
such that 
there exists a proper lower semicontinuous convex function 
$\widehat{\beta} : \mathbb{R} \to [0, +\infty)$ 
satisfying that   
$\widehat{\beta}(0) = 0$ and 
$\beta = \partial\widehat{\beta}$, 
where $\partial\widehat{\beta}$  
is the subdifferential of $\widehat{\beta}$. 
Moreover, $\beta : \mathbb{R} \to \mathbb{R}$ is local Lipschitz continuous. 
\item[(C3)] $\pi : \mathbb{R} \to \mathbb{R}$ is a Lipschitz continuous function. 
\item[(C4)] $f \in L^2(\Omega\times(0, T)) \cap L^1(0, T; L^{\infty}(\Omega))$,   
$\theta_0 \in L^{2}(\Omega)$, 
$\ln \theta_{0} \in L^2(\Omega)$,    
$\varphi_0, v_0 \in L^{\infty}(\Omega)$. 
\end{enumerate}

\medskip

%
%
%
Let us define the Hilbert spaces 
   \[
   H:=L^2(\Omega), \quad V:=H^1(\Omega)
   \]
 with inner products 
 \begin{align*} 
 &(u_{1}, u_{2})_{H}:=\int_{\Omega}u_{1}u_{2}\,dx \quad  (u_{1}, u_{2} \in H), \\
 &(v_{1}, v_{2})_{V}:=
 \int_{\Omega}\nabla v_{1}\cdot\nabla v_{2}\,dx + \int_{\Omega} v_{1}v_{2}\,dx \quad 
 (v_{1}, v_{2} \in V),
\end{align*}
 respectively,
 and with the related Hilbertian norms. 
 Moreover, we use the notation
   \[
   W:=\bigl\{z\in H^2(\Omega)\ |\ \partial_{\nu}z = 0 \quad 
   \mbox{a.e.\ on}\ \partial\Omega\bigr\}.
   \] 
The notation $V^{*}$ denotes the dual space of $V$ with 
 duality pairing $\langle\cdot, \cdot\rangle_{V^*, V}$. 

\bigskip

We define weak solutions of \eqref{P} as follows.
%
%
%
 \begin{df}         
 A pair $(\theta, \varphi)$ with 
    \begin{align*}
    &\theta \in L^2(0, T; V),\ \ln \theta \in H^1(0, T; V^{*}) \cap L^{\infty}(0, T; H), \\ 
    &\varphi \in W^{2, 2}(0, T; H) \cap W^{1, \infty}(0, T; L^{\infty}(\Omega)) 
    \end{align*}
 is called a {\it weak solution} of \eqref{P} 
 if $(\theta, \varphi)$ satisfies 
    \begin{align*}
        &\langle (\ln \theta)_{t}, w \rangle_{V^{*}, V}
           + \ell(\varphi_t, w)_{H} + \eta\int_{\Omega} \nabla\theta\cdot\nabla w = (f, w)_{H}  
       \notag \\ 
                &\hspace{75mm}   \mbox{a.e.\ on}\ (0, T) 
                           \ \  \mbox{for all}\ w \in V,  
     \\[2mm]
        &\varphi_{tt} + \varphi_t +a(\cdot)\varphi - J\ast\varphi 
                                                             + \beta(\varphi) + \pi(\varphi) = \ell\theta 
                               \quad \mbox{a.e.\ on}\ \Omega\times(0, T), 
     \\[2mm]
        &(\ln \theta)(0) = \ln \theta_0,\ \varphi(0) = \varphi_0,\ \varphi_{t}(0) = v_0 
                                                       \quad \mbox{a.e.\ on}\ \Omega. 
     \end{align*}
 \end{df}

\smallskip

\noindent 
The following theorem is concerned with existence of weak solutions to \eqref{P}.  
\begin{thm}\label{maintheorem1}
Assume that {\rm (C1)-(C4)} hold. 
Then there exists a weak solution $(\theta, \varphi)$ of \eqref{P}. 
\end{thm}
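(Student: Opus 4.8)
The plan is to construct solutions by a multi-step approximation scheme, introducing two regularizations simultaneously: a Yosida-type regularization of the singular nonlinearity $\ln$ in the temperature equation, and a time discretization to handle the inertial term. First I would replace $\ln\theta$ by a smooth, globally Lipschitz monotone approximation $\alpha_\lambda$ (for instance the inverse of $r\mapsto e^r$ regularized, or a Yosida approximation of the maximal monotone graph $r\mapsto\ln r$), so that the first equation becomes a nondegenerate parabolic equation. Then I would set up a time-discrete scheme with step $h=T/n$: at each level the inertial term $\varphi_{tt}$ is replaced by a second-order difference quotient, $\varphi_t$ by a backward difference, and the convolution term $a(\cdot)\varphi-J*\varphi$ is treated explicitly or implicitly as convenient. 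The resulting stationary problem at each time step is an elliptic system which I would solve via maximal monotone operator theory (the temperature equation is coercive after regularization, and the $\varphi$-equation is solvable because $\beta$ is maximal monotone with $\pi$ Lipschitz as a perturbation).

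The core of the argument is the derivation of a priori estimates uniform in both the regularization parameter $\lambda$ and the discretization parameter $h$ (equivalently $n$). I would test the discrete temperature equation by the discrete analogue of $\theta$ and exploit the monotonicity of $\alpha_\lambda$ together with the structure $(\ln\theta)_t\cdot\theta\geq$ a convex-function derivative, to obtain control of $\theta$ in $L^2(0,T;V)$ and $\ln\theta$ in $L^\infty(0,T;H)$. Simultaneously I would test the discrete $\varphi$-equation by the discrete $\varphi_t$, producing an energy identity that controls $\varphi_t$ in $L^\infty(0,T;H)$, $\varphi$ in $L^\infty(0,T;H)$, and (via the convexity of $\widehat\beta$) the quantity $\widehat\beta(\varphi)$; the term $\langle a(\cdot)\varphi-J*\varphi,\varphi_t\rangle$ is handled using (C1) so that it contributes a controllable nonlocal energy. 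The delicate and genuinely new estimate — the one singled out as the ``key estimate'' in Remark \ref{Keypoints} — is the $L^\infty(\Omega\times(0,T))$ bound on $\varphi$. Because the nonlocal operator does not smooth $\varphi$, I cannot extract this from Sobolev embedding of an $H^2$-bound on $\theta$ as in Case 4; instead I would exploit that $v_0,\varphi_0\in L^\infty(\Omega)$ together with the ODE-in-time character of the second equation (the spatial operator being a bounded perturbation via $J$), deriving a Gronwall-type pointwise bound that propagates the $L^\infty$ data forward in time, using that $\beta$ is merely local Lipschitz and $\theta$ enters linearly. This $L^\infty$-control of $\varphi$ is precisely what makes $\beta(\varphi)$ estimable despite the low regularity of both $\varphi$ and $\theta$.

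Having secured these uniform bounds, I would pass to the limit in two stages. First, letting the time step $h\to0$ (i.e. $n\to\infty$) with $\lambda$ fixed: by the estimates above and standard compactness (Aubin–Lions for the $\theta$-component, the second-order-in-time bound for $\varphi$) I would obtain convergence of the piecewise-constant and piecewise-linear interpolants to a solution of the $\lambda$-regularized continuous problem, identifying the nonlinear limits of $\beta(\varphi)$ and $\pi(\varphi)$ by strong convergence of $\varphi$ and of $\alpha_\lambda(\theta)$ by its Lipschitz continuity. Second, letting $\lambda\to0$: here the uniform estimates (which I would arrange to be $\lambda$-independent) again yield weak and strong limits, and the crucial point is identifying the weak limit of $\alpha_\lambda(\theta_\lambda)$ with $\ln\theta$. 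This I would accomplish by a monotonicity/maximal-monotone-operator argument (a limsup inequality of the form $\limsup\int\alpha_\lambda(\theta_\lambda)\theta_\lambda\le\int(\ln\theta)\theta$), combined with the lower bound on $\theta$ needed to guarantee $\ln\theta$ is well defined; the $L^\infty(0,T;H)$-bound on $\ln\theta_\lambda$ is what prevents $\theta_\lambda$ from degenerating to zero in the limit.

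The main obstacle, as the introduction emphasizes, is the interaction of the singular term $(\ln\theta)_t$, the inertial term $\varphi_{tt}$, and the nonlocal operator $a(\cdot)\varphi-J*\varphi$. Concretely, the difficulty concentrates in the key $L^\infty$-estimate for $\varphi$: in the local case one buys $L^\infty(\varphi)$ from $H^2$-regularity of $\theta$, but the logarithmic nonlinearity forbids that high regularity on $\theta$, so the $L^\infty$-bound must instead be extracted directly from the second equation treated as a perturbed ODE, which is exactly why the $L^\infty$-data assumption on $\varphi_0$ and $v_0$ in (C4) is indispensable. I expect the technical heart of the proof to lie in making all estimates simultaneously uniform in $\lambda$ and $h$ and in carrying out the $\alpha_\lambda(\theta_\lambda)\to\ln\theta$ identification cleanly at the final passage to the limit.
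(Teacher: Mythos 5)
Your outline correctly locates the crux in the $L^{\infty}(\Omega\times(0, T))$ bound on $\varphi$, but the mechanism you propose for it does not close. Treating the second equation as a perturbed ODE and testing by $\varphi_t$ (discretely, by $v_{n+1}(x)$) produces the coupling term $\ell\int_0^t \theta(x,s)\,\varphi_t(x,s)\,ds$, and none of the estimates available at that stage --- $\theta$ bounded in $L^2(0,T;V)$, $\ln\theta$ bounded in $L^{\infty}(0,T;H)$ --- gives \emph{any} pointwise-in-$x$ control of $\theta$, so your ``Gronwall-type pointwise bound that propagates the $L^{\infty}$ data'' has an uncontrolled source term. This is precisely the obstruction described in Remark \ref{Keypoints}, and the paper resolves it with an estimate your proposal never derives: summing the discrete temperature equation in time turns it into an elliptic equation for $h\sum_{n=0}^{m-1}\theta_{n+1}$ whose right-hand side is bounded in $H$ uniformly (by the $L^{\infty}(0,T;H)$ bounds on $u_h$ and on $\varphi$, $v$), whence an $L^{\infty}(0,T;H^2(\Omega))$ bound on $\int_0^t\theta(s)\,ds$ (Lemma \ref{esth5}); then, using the positivity $\theta>0$, the coupling term is estimated by $\sup|v|\cdot\bigl\| \int_0^t\theta \bigr\|_{L^{\infty}(\Omega)}$ via the embedding $W\hookrightarrow L^{\infty}(\Omega)$, which Young and the discrete Gronwall lemma can absorb (Lemma \ref{esth6}). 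Without this time-integrated $H^2$ estimate and the sign of $\theta$, your key step fails.

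There is a second genuine gap at the limit passages: you claim strong convergence of $\varphi$ from ``standard compactness (Aubin--Lions \dots, the second-order-in-time bound for $\varphi$).'' But every bound you have on $\varphi$ lives in spaces whose spatial component is $L^{\infty}(\Omega)$ or $H$; since the nonlocal operator $a(\cdot)\varphi - J\ast\varphi$ does not regularize, there is no compact spatial embedding in play and Aubin--Lions yields nothing for $\varphi$ (time-regularity alone gives only weak-$*$ limits). Since $\beta$ is merely locally Lipschitz, weak convergence is not enough to identify $\beta(\varphi)$. The paper circumvents this by proving Cauchy criteria for the approximate solutions (Lemmas \ref{CauchyforPh} and \ref{CauchyforPep}), which give strong convergence in $C([0,T];H)$ directly, and then identifies $\beta(\varphi)$ using the uniform $L^{\infty}$ bounds together with local Lipschitz continuity; this is also flagged as a key point in Remark \ref{Keypoints}. (Your choice of regularization --- replacing $\ln$ by a Lipschitz $\alpha_{\lambda}$ --- differs from the paper's approximation, which keeps $\ln\theta_{\ep}$ and adds $\ep\theta_{\ep}$ under the time derivative; that is a legitimate variant and not the problem. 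The two gaps above are what block the proof as proposed.)
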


\subsection{Approximations}

In reference to \cite{CC2018}, to prove existence for \eqref{P} 
we consider the approximation 
\begin{equation*}\tag*{(P)$_{\ep}$}\label{Pep}
     \begin{cases}
         (\ep\theta_{\ep} + \ln \theta_{\ep})_{t} + \ell(\varphi_{\ep})_{t}  
         - \eta\Delta\theta_{\ep} = f   
         & \mbox{in}\ \Omega\times(0, T), 
 \\[2mm]
         (\varphi_{\ep})_{tt} + (\varphi_{\ep})_{t} 
         + a(\cdot)\varphi_{\ep} - J\ast\varphi_{\ep} 
                                   + \beta(\varphi_{\ep}) + \pi(\varphi_{\ep}) = \ell\theta_{\ep}
         & \mbox{in}\ \Omega\times(0, T), 
 \\[2mm]
         \partial_{\nu}\theta_{\ep} = 0                                   
         & \mbox{on}\ \partial\Omega\times(0, T),
 \\[2mm]
        (\ep\theta_{\ep} + \ln \theta_{\ep})(0) 
        = \ep\theta_{0} + \ln \theta_{0},\ 
        (\varphi_{\ep})(0) = \varphi_0,\ (\varphi_{\ep})_{t}(0) = v_0                                    
         & \mbox{in}\ \Omega, 
     \end{cases}
 \end{equation*}
where $\ep \in (0, 1]$. 
The definition of weak solutions to \ref{Pep} is as follows.  
\begin{df}         
 A pair $(\theta_{\ep}, \varphi_{\ep})$ with 
    \begin{align*}
    &\theta_{\ep} \in L^2(0, T; V) \cap L^{\infty}(0, T; H),\   
      \ep\theta_{\ep} + \ln \theta_{\ep} \in H^1(0, T; V^{*}),\  
      \ln \theta_{\ep} \in L^{\infty}(0, T; H),      \\       
    &\varphi_{\ep} \in W^{2, 2}(0, T; H) \cap W^{1, \infty}(0, T; L^{\infty}(\Omega)) 
    \end{align*}
 is called a {\it weak solution} of \ref{Pep} 
 if $(\theta_{\ep}, \varphi_{\ep})$ satisfies 
    \begin{align*}
        &\langle (\ep\theta_{\ep} + \ln \theta_{\ep})_{t}, w \rangle_{V^{*}, V}
           + \ell((\varphi_{\ep})_t, w)_{H} 
           + \eta\int_{\Omega} \nabla\theta_{\ep}\cdot\nabla w = (f, w)_{H}  
       \notag \\ 
                &\hspace{75mm}   \mbox{a.e.\ on}\ (0, T) 
                           \ \  \mbox{for all}\ w \in V,  
     \\[2mm]
        &(\varphi_{\ep})_{tt} + (\varphi_{\ep})_t +a(\cdot)\varphi_{\ep} - J\ast\varphi_{\ep} 
                                   + \beta(\varphi_{\ep}) + \pi(\varphi_{\ep}) = \ell\theta_{\ep} 
                               \quad \mbox{a.e.\ on}\ \Omega\times(0, T), 
     \\[2mm]
        &(\ep\theta_{\ep} + \ln \theta_{\ep})(0) 
        = \ep\theta_{0} + \ln \theta_{0},\ 
        (\varphi_{\ep})(0) = \varphi_0,\ (\varphi_{\ep})_{t}(0) = v_0         
                                                   \quad \mbox{a.e.\ on}\ \Omega. 
     \end{align*}
 \end{df}

\smallskip

\noindent The following theorem asserts existence of weak solutions to \ref{Pep}. 
\begin{thm}\label{maintheorem2}
Assume that {\rm (C1)-(C4)} hold. 
Then for all $\ep \in (0, 1]$ 
there exists a weak solution $(\theta_{\ep}, \varphi_{\ep})$ of {\rm \ref{Pep}}.
\end{thm}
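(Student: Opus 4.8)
The plan is to prove Theorem~\ref{maintheorem2} by a time-discretization (Rothe) scheme, exploiting the regularization already built into \ref{Pep}. The role of the term $\ep\theta_{\ep}$ is that, for fixed $\ep\in(0,1]$, the real function $\gamma_{\ep}(r):=\ep r+\ln r$ is a strictly increasing bijection of $(0,+\infty)$ onto $\mathbb{R}$ whose inverse $\gamma_{\ep}^{-1}$ is globally Lipschitz with constant $\ep^{-1}$ (since $\gamma_{\ep}'(r)=\ep+1/r\ge\ep$); hence I would take the enthalpy-type quantity $\rho_{\ep}:=\ep\theta_{\ep}+\ln\theta_{\ep}$ as unknown and recover $\theta_{\ep}=\gamma_{\ep}^{-1}(\rho_{\ep})$ Lipschitz-continuously and automatically positive. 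I would also write the second equation as a first-order system through $v_{\ep}:=(\varphi_{\ep})_t$. Partitioning $[0,T]$ into $N$ subintervals of length $h=T/N$, setting $\varphi_{-1}:=\varphi_0-hv_0$ so that $(\varphi_0-\varphi_{-1})/h=v_0$, and using $f_n:=h^{-1}\int_{(n-1)h}^{nh}f$, the scheme at level $n$ reads
\begin{align*}
&\frac{(\ep\theta_n+\ln\theta_n)-(\ep\theta_{n-1}+\ln\theta_{n-1})}{h}+\ell\frac{\varphi_n-\varphi_{n-1}}{h}-\eta\Delta\theta_n=f_n,\\
&\frac{\varphi_n-2\varphi_{n-1}+\varphi_{n-2}}{h^2}+\frac{\varphi_n-\varphi_{n-1}}{h}+a(\cdot)\varphi_n-J\ast\varphi_n+\beta(\varphi_n)+\pi(\varphi_n)=\ell\theta_n,
\end{align*}
together with $\partial_{\nu}\theta_n=0$ on $\partial\Omega$.

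For the solvability of each step I would decouple the stationary system. For given $\theta_n\in H$, the second equation is governed by the operator $\varphi\mapsto(h^{-2}+h^{-1})\varphi+a(\cdot)\varphi-J\ast\varphi+\beta(\varphi)+\pi(\varphi)$ on $H$; by (C1) the nonlocal part $J\ast\,\cdot$ is bounded on $H$ and $a(\cdot)\in L^\infty(\Omega)$, while $\beta=\partial\widehat{\beta}$ is maximal monotone and $\pi$ is Lipschitz by (C3), so for $h$ small this operator is strongly monotone and maximal monotone, hence bijective; it yields $\varphi_n=G_n(\theta_n)$ with $G_n$ Lipschitz from $H$ to $H$. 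Substituting into the first equation and passing to the variable $\rho=\gamma_{\ep}(\theta)$ produces an elliptic problem whose principal part $\rho\mapsto h^{-1}\rho-\eta\Delta\gamma_{\ep}^{-1}(\rho)$ is maximal monotone and coercive, perturbed by the Lipschitz contribution of $G_n$; existence of $\theta_n\in W$ (hence of $\varphi_n$) then follows from the surjectivity of maximal monotone operators, and the positivity of $\theta_n$ is built into $D(\ln)$.

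Next I would establish a priori estimates uniform in $h$. Testing the discrete first equation by $\theta_n$ and summing produces, through the convexity of the primitive of $\gamma_{\ep}$, the bounds $\theta_{\ep}\in L^\infty(0,T;H)\cap L^2(0,T;V)$ (the $L^\infty(0,T;H)$ bound is precisely what the coefficient $\ep$ buys), while testing by $\ln\theta_n$ yields $\ln\theta_{\ep}\in L^\infty(0,T;H)$ and, reading the equation backwards, $(\ep\theta_{\ep}+\ln\theta_{\ep})_t\in L^2(0,T;V^{*})$. Testing the discrete second equation by the discrete velocity $v_n=(\varphi_n-\varphi_{n-1})/h$ gives the energy bounds $v_{\ep}\in L^\infty(0,T;H)$, $\varphi_{\ep}\in L^\infty(0,T;H)$ and $\sup_{(0,T)}\int_\Omega\widehat{\beta}(\varphi_{\ep})<+\infty$, the nonlocal and $\pi$ contributions being absorbed via (C1), (C3) and Gronwall. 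Once the $L^\infty$ bound of the next paragraph is available, $\beta(\varphi_{\ep})\in L^\infty(0,T;H)$, and reading the second equation as $(\varphi_{\ep})_{tt}=\ell\theta_{\ep}-v_{\ep}-a(\cdot)\varphi_{\ep}+J\ast\varphi_{\ep}-\beta(\varphi_{\ep})-\pi(\varphi_{\ep})$ yields $(\varphi_{\ep})_{tt}\in L^2(0,T;H)$.

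The main obstacle is the regularity $\varphi_{\ep},(\varphi_{\ep})_t\in L^\infty(0,T;L^\infty(\Omega))$ demanded by the definition of weak solution, equivalently the control of $\beta(\varphi_{\ep})$ through the local Lipschitz assumption (C2). Since the spatial operator on $\varphi$ is the nonlocal $a(\cdot)\varphi-J\ast\varphi$ rather than $-\Delta\varphi$, there is no elliptic smoothing placing $\varphi$ into $H^2\hookrightarrow L^\infty$, and since $\theta_{\ep}$ is controlled only in $L^2(0,T;V)$ one cannot simply feed an $L^1(0,T;L^\infty(\Omega))$ forcing into the otherwise pointwise-in-$x$ integral representation $v_{\ep}(t)=e^{-t}v_0+\int_0^t e^{-(t-s)}\big(\ell\theta_{\ep}-a(\cdot)\varphi_{\ep}+J\ast\varphi_{\ep}-\beta(\varphi_{\ep})-\pi(\varphi_{\ep})\big)(s)\,ds$. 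This is exactly the combined difficulty caused by $(\ln\theta)_t$, the inertial term and $a(\cdot)\varphi-J\ast\varphi$ described in Section~\ref{Sec1.1}, and handling it at the discrete level is the heart of the proof: the required uniform $L^\infty$ bound on $\varphi_n$ and $v_n$ is the key estimate of Remark~\ref{Keypoints}, which exploits $f\in L^1(0,T;L^\infty(\Omega))$ and $\varphi_0,v_0\in L^\infty(\Omega)$ from (C4). With all bounds in hand I would pass to the limit $h\to0$ on the piecewise-constant and piecewise-linear interpolants: weak and weak-$*$ compactness treat the linear quantities, an Aubin--Lions--Simon argument upgrades $\varphi_{\ep}$ and $\theta_{\ep}$ to strong convergence so that the Lipschitz terms $\pi(\varphi_{\ep})$, $a(\cdot)\varphi_{\ep}-J\ast\varphi_{\ep}$ and the locally Lipschitz $\beta(\varphi_{\ep})$ pass to the limit, while the maximal monotone graphs $\gamma_{\ep}$ and $\partial\widehat{\beta}$ close under the standard demiclosedness (weak--strong) argument; identifying the limit with a weak solution of \ref{Pep} and verifying the three initial conditions completes the proof.
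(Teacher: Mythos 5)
Your overall architecture (Rothe scheme, decoupled step solvability, energy estimates, monotonicity limit) matches the paper's, and your per-step solvability via perturbed maximal monotone surjectivity is an acceptable variant of the paper's Banach fixed-point argument with contraction constant $\ell^2h^2/(\ep(1+h-\|\pi'\|_{L^{\infty}(\mathbb{R})}h^2))$. But there is a genuine gap exactly at what you yourself call ``the heart of the proof'': you never derive the uniform $L^{\infty}$ bound on $\varphi_{n}, v_{n}$ --- you only point to Remark \ref{Keypoints} and the hypotheses it exploits, which is assuming the conclusion. The missing chain is concrete: multiply the discrete first equation of \ref{Pn} by $h$ and sum telescopically over $n=0,\dots,m-1$ to get $u_{m} + \ell v_{m} - \eta h\Delta\bigl(\sum_{n=0}^{m-1}\theta_{n+1}\bigr) = u_{0} + \ell v_{0} + h\sum_{n=0}^{m-1} f_{n+1}$; the $L^{\infty}(0,T;H)$ bounds on $u_{m}$ (obtained by testing with $u_{n+1}$ and using Lemma \ref{PT}) and on $v_{m}$ then give an $H$-bound on $h\Delta\sum\theta_{n+1}$, hence a uniform $W$-bound on $h\sum_{n=0}^{m-1}\theta_{n+1}$ (Lemma \ref{esth5}); since $W \hookrightarrow L^{\infty}(\Omega)$ and $\theta_{n+1}>0$, the dangerous pointwise term is controlled by $\ell h\sum_{n=0}^{m-1}\theta_{n+1}(x)v_{n+1}(x) \leq \ell\bigl(\max_{m}\|v_{m}\|_{L^{\infty}(\Omega)}\bigr)\bigl\|h\sum_{n=0}^{m-1}\theta_{n+1}\bigr\|_{L^{\infty}(\Omega)}$, and a pointwise-in-$x$ discrete Gronwall argument for $|\varphi_{m}(x)|^2+|v_{m}(x)|^2+\widehat{\beta}(\varphi_{m}(x))$ closes the estimate (Lemma \ref{esth6}). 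Without this, your subsequent claims ($\beta(\varphi_{\ep}) \in L^{\infty}(0,T;H)$, $(\varphi_{\ep})_{tt}\in L^2(0,T;H)$) have no support.

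There is a second genuine gap in the limit passage: you invoke an Aubin--Lions--Simon argument to upgrade $\varphi$ (and its interpolants) to strong convergence, but no such compactness is available, precisely because the diffusion in the $\varphi$-equation is nonlocal: the scheme yields no gradient bound on $\varphi$, and bounded sets of $L^{\infty}(\Omega)$ are not precompact in $H$ (think of $\sin(kx)$, which converges strongly in $V^{*}$ but not in $H$). Aubin--Lions only gives strong convergence of $\widehat{v}_{h}$ in $L^2(0,T;V^{*})$ via the compact embedding $H \hookrightarrow V^{*}$, which is too weak to pass to the limit in $\beta(\overline{\varphi}_{h})$. The paper closes this hole with its second key device, also flagged in Remark \ref{Keypoints}: the Cauchy criterion of Lemma \ref{CauchyforPh}, obtained by testing the difference of two discrete solutions (mesh sizes $h$ and $\tau$) with the velocity difference and using the monotonicity structure together with the interpolation identities \eqref{tool4}--\eqref{tool7}, which yields $\|\widehat{\varphi}_{h}-\widehat{\varphi}_{\tau}\|_{C([0,T];H)} + \|\widehat{v}_{h}-\widehat{v}_{\tau}\|_{C([0,T];H)} \leq C(h^{1/2}+\tau^{1/2}) + C\|\widehat{v}_{h}-\widehat{v}_{\tau}\|_{L^2(0,T;V^{*})}^{1/2}$; the right-hand side vanishes along subsequences by the $V^{*}$-compactness above, converting weak-$*$ limits into strong $C([0,T];H)$ limits for $\varphi$ and $v$, after which $\beta$ passes to the limit by local Lipschitz continuity plus the uniform $L^{\infty}$ bounds. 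Your treatment of the singular term (identification of $\ln\theta_{\ep}$ by the limsup/monotonicity argument) is correct and matches the paper; it is the two points above --- deriving the key estimate rather than citing it, and replacing the inapplicable Aubin--Lions step for $\varphi$ by a Cauchy criterion --- that must be repaired.
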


\medskip

To show existence for \ref{Pep}, 
in reference to \cite{CC2018, K7},  
we employ the following time discretization scheme: 
find $(\theta_{n+1}, \varphi_{n+1})$ such that 
\begin{equation*}\tag*{(P)$_{n}$}\label{Pn}
     \begin{cases}
         \frac{u_{n+1} - u_{n}}{h} + \ell\frac{\varphi_{n+1}-\varphi_{n}}{h}
         - \eta\Delta\theta_{n+1} = f_{n+1}   
         & \mbox{in}\ \Omega, 
 \\[2mm]
         z_{n+1} + v_{n+1} + a(\cdot)\varphi_{n} - J\ast\varphi_{n} 
         + \beta(\varphi_{n+1}) + \pi(\varphi_{n+1}) 
         = \ell\theta_{n+1} 
         & \mbox{in}\ \Omega, 
 \\[1mm]
         z_{n+1} = \frac{v_{n+1}-v_{n}}{h},\ v_{n+1} = \frac{\varphi_{n+1}-\varphi_{n}}{h} 
         & \mbox{in}\ \Omega, 
 \\[1mm]
         \partial_{\nu}\theta_{n+1} = 0                                   
         & \mbox{on}\ \partial\Omega 
     \end{cases}
\end{equation*}
for $n=0, ... , N-1$, where $h=\frac{T}{N}$, $N \in \mathbb{N}$, 
\begin{align}\label{uj}
&u_{j} := \ep\theta_{j} + \ln \theta_{j}
\end{align}
for $j=0, 1, ..., N$, and   
$f_{k} := \frac{1}{h}\int_{(k-1)h}^{kh} f(s)\,ds$  
for $k = 1, ... , N$. 
Indeed, we can prove existence for \ref{Pn}.   
\begin{thm}\label{maintheorem3}
Assume that {\rm (C1)-(C4)} hold. 
Then for all $\ep \in (0, 1]$ there exists $h_{0\ep} \in (0, 1]$  
such that for all $h \in (0, h_{0\ep})$ 
there exists a unique solution of {\rm \ref{Pn}} satisfying 
\[
\theta_{n+1} \in W,\ \varphi_{n+1} \in L^{\infty}(\Omega) 
\quad \mbox{for}\ n = 0, ..., N-1. 
\]
\end{thm}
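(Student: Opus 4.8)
The plan is to solve the fully implicit scheme \ref{Pn} by decoupling it into a pointwise algebraic relation for $\varphi_{n+1}$ and a single nonlinear elliptic equation for $\theta_{n+1}$, and then to treat the latter by the theory of maximal monotone operators. I argue inductively on $n$, assuming that $\theta_{n} \in W$, $\ln \theta_{n} \in H$ (so that $u_{n} := \ep\theta_{n} + \ln\theta_{n} \in H$) and $\varphi_{n}, v_{n} \in L^{\infty}(\Omega)$ are already known; the base case $n=0$ holds by (C4). First I eliminate the velocity and acceleration: from the last line of \ref{Pn}, $v_{n+1} = (\varphi_{n+1}-\varphi_{n})/h$ and $z_{n+1} = (v_{n+1}-v_{n})/h$, so the order-parameter equation becomes, for a.e.\ $x \in \Omega$,
\[
\lambda\varphi_{n+1} + \beta(\varphi_{n+1}) + \pi(\varphi_{n+1}) = \ell\theta_{n+1} + g_{n}, \qquad \lambda := \frac{1}{h^{2}}+\frac{1}{h},
\]
where $g_{n} := \frac{v_{n}}{h} + \lambda\varphi_{n} - a(\cdot)\varphi_{n} + J\ast\varphi_{n} \in L^{\infty}(\Omega)$ by (C1) and the induction hypothesis. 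Choosing $h_{0\ep}$ so small that $\lambda > L_{\pi}$ (with $L_{\pi}$ the Lipschitz constant of $\pi$), the scalar map $G(r) := \lambda r + \beta(r) + \pi(r)$ is continuous by (C2)--(C3), strongly monotone with modulus $\lambda - L_{\pi} > 0$ and coercive, hence a bi-Lipschitz increasing bijection of $\mathbb{R}$; its inverse $F := G^{-1}$ is nondecreasing and Lipschitz, and the relation above reads $\varphi_{n+1} = F(\ell\theta_{n+1}+g_{n})$.

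Substituting $v_{n+1}=(\varphi_{n+1}-\varphi_{n})/h$ and $u_{n+1}=\ep\theta_{n+1}+\ln\theta_{n+1}$ into the first line of \ref{Pn} and multiplying by $h$ gives the single elliptic problem for $\theta := \theta_{n+1}$,
\[
-h\eta\Delta\theta + \ep\theta + \ln\theta + \ell F(\ell\theta + g_{n}) = G_{n} \ \ \mbox{in}\ \Omega, \qquad \partial_{\nu}\theta = 0 \ \ \mbox{on}\ \partial\Omega,
\]
with $G_{n} := u_{n} + \ell\varphi_{n} + h f_{n+1} \in H$ (here $\ell\varphi_{n+1}=\ell F(\ell\theta+g_{n})$ has been moved to the left). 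I read this as the inclusion $\partial\Phi(\theta) + B(\theta) \ni G_{n}$ in $H$, where
\[
\Phi(\theta) := \frac{h\eta}{2}\int_{\Omega}|\nabla\theta|^{2} + \frac{\ep}{2}\int_{\Omega}\theta^{2} + \int_{\Omega}\Gamma(\theta), \qquad \Gamma(s) := s\ln s - s + 1 \ (s>0),
\]
with $\Gamma \equiv +\infty$ on $(-\infty,0]$ and $\Phi \equiv +\infty$ off $V$, and $B(\theta) := \ell F(\ell\theta + g_{n})$. Since $F$ is nondecreasing and Lipschitz, $B$ is a monotone, Lipschitz, everywhere-defined operator on $H$, while $\partial\Phi$ is maximal monotone as the subdifferential of a proper lsc convex functional; hence $\partial\Phi + B$ is maximal monotone. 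Using $\Gamma \ge 0$ and the monotonicity of $B$ (so that $(B\theta,\theta)_{H} \ge -\|B(0)\|_{H}\|\theta\|_{H}$), the $\frac{\ep}{2}\|\theta\|_{H}^{2}$ term makes $\partial\Phi + B$ coercive on $H$. By the surjectivity theorem for coercive maximal monotone operators there exists $\theta_{n+1}$ solving the inclusion, and strict monotonicity (from $\ep>0$ and the strict convexity of $\Gamma$) yields uniqueness.

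It remains to upgrade the regularity and close the induction. To identify the domain of $\partial\Phi$ — that is, to show $\theta_{n+1} \in W$ with $\ln\theta_{n+1}\in H$, rather than merely $\theta_{n+1}\in V$ — I replace $\ln$ by its Moreau--Yosida regularization $\ln_{\mu}$, a globally Lipschitz increasing function; the resulting nonsingular problems are immediately solvable, all terms being monotone and Lipschitz. Testing by $\theta^{\mu}$ and by $-\Delta\theta^{\mu}$ and invoking the elliptic estimate $\|z\|_{W} \le C(\|\Delta z\|_{H}+\|z\|_{H})$ produces $\mu$-independent bounds on $\theta^{\mu}$ in $W$ and on $\ln_{\mu}(\theta^{\mu})$ in $H$; passing to the limit $\mu\to 0$ gives $\theta_{n+1}\in W$, $\ln\theta_{n+1}\in H$, whence $\theta_{n+1}>0$ a.e.\ since $D(\partial\Gamma)\subset(0,\infty)$. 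Because $d\le 3$ gives $W\hookrightarrow H^{2}(\Omega)\hookrightarrow L^{\infty}(\Omega)$ and $g_{n}\in L^{\infty}(\Omega)$, the Lipschitz map $F$ yields $\varphi_{n+1}=F(\ell\theta_{n+1}+g_{n})\in L^{\infty}(\Omega)$; defining $v_{n+1}, z_{n+1}$ by the discrete formulas completes the step. The delicate point throughout is the singular term $\ln\theta$: guaranteeing that the solution stays in its domain (positivity of $\theta_{n+1}$) and recovering the $W$-regularity in the presence of the extra monotone terms, which is exactly what the regularization-plus-elliptic-regularity argument secures, whereas the reduction step disposes of both the coupling and the inertial term cleanly.
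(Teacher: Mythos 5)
Your argument is correct in substance, but it takes a genuinely different route from the paper's. The paper never eliminates $\varphi_{n+1}$: it solves the two equations of \ref{Qn} separately (Lemma \ref{elliptic1} for $\ep\theta + \ln\theta - \eta h\Delta\theta = g$, Lemma \ref{elliptic2} for $\varphi + h\varphi + h^{2}\beta(\varphi) + h^{2}\pi(\varphi) = g$), composes the two solution maps into ${\cal S} = {\cal B}\circ{\cal A}$, and proves the contraction estimate
$\|{\cal S}\varphi - {\cal S}\widetilde{\varphi}\|_{H} \leq \frac{\ell^{2}h^{2}}{\ep(1+h-\|\pi'\|_{L^{\infty}(\mathbb{R})}h^{2})}\|\varphi - \widetilde{\varphi}\|_{H}$,
so that Banach's fixed point theorem applies once $h$ is small depending on $\ep$; the $L^{\infty}$-bound on $\varphi_{n+1}$ is then extracted a posteriori by a pointwise test of the second equation. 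You instead invert the second equation pointwise --- $G = \lambda\,\mathrm{id} + \beta + \pi$ with $\lambda = h^{-2}+h^{-1}$ is a bi-Lipschitz bijection of $\mathbb{R}$ as soon as $\lambda > \|\pi'\|_{L^{\infty}(\mathbb{R})}$ --- and substitute $\varphi_{n+1} = F(\ell\theta_{n+1}+g_{n})$, $F = G^{-1}$, into the first equation, leaving a single monotone singular elliptic problem for $\theta_{n+1}$, whose hard part (the $\ln\theta$ term) you treat with exactly the machinery of the paper's Lemma \ref{elliptic1}: Yosida regularization, the sign inequality of Lemma \ref{PT}, elliptic regularity, and a monotone limit. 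Your route buys: no fixed-point iteration, uniqueness directly from monotonicity, a threshold on $h$ that is independent of $\ep$ (the paper's contraction forces $h_{0\ep}$ to shrink like $\sqrt{\ep}$), and $\varphi_{n+1}\in L^{\infty}(\Omega)$ for free from $W\hookrightarrow L^{\infty}(\Omega)$ (valid since $d\le 3$) together with the Lipschitz continuity of $F$. The paper's route buys modularity: each sub-lemma is a clean scalar monotone problem, and one never has to analyze the $x$-dependent composite nonlinearity $B(\theta) = \ell F(\ell\theta + g_{n})$ inside the elliptic equation.

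A few details in your sketch need routine repair. (i) Set $\Gamma(0):=1$, the limiting value of $s\ln s - s + 1$, not $+\infty$; otherwise $\Gamma$ is not lower semicontinuous. Likewise, coercivity of $\partial\Phi + B$ must be measured relative to a point of $D(\Phi)$, e.g.\ the constant function $1$, since $0\notin D(\Phi)$. (ii) In the regularity step, the term $(B(\theta^{\mu}), -\Delta\theta^{\mu})_{H}$ has no sign: $g_{n}$ is merely $L^{\infty}(\Omega)$, so you cannot integrate by parts to exploit the monotonicity of $F$; this term must be absorbed via the Young inequality into $h\eta\|\Delta\theta^{\mu}\|_{H}^{2}$, using the Lipschitz bound on $F$ and the previously obtained $H$-bound on $\theta^{\mu}$ --- only the $\ln_{\mu}$-term is handled by the sign argument of Lemma \ref{PT}. (iii) Identifying the weak $H$-limit of $\ln_{\mu}(\theta^{\mu})$ with $\ln\theta_{n+1}$ requires the standard monotonicity argument (strong convergence of $\theta^{\mu}$ by compactness plus \cite[Lemma 1.3, p.\ 42]{Barbu1}), as spelled out in the paper's proof of Lemma \ref{elliptic1}. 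Finally, state your induction hypothesis as $u_{n}\in H$ and $\varphi_{n}, v_{n}\in L^{\infty}(\Omega)$ rather than $\theta_{n}\in W$: condition (C4) gives only $\theta_{0}\in H$, and $u_{0}\in H$ is all your argument actually uses.
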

In order to derive existence for \ref{Pep} 
by passing to the limit in \ref{Pn} as $h \searrow 0$, 
we put  
\begin{align}
&\widehat{u}_{h}(t) := u_{n} + \frac{u_{n+1} - u_{n}}{h}(t-nh), 
\label{hat1} 
\\[2mm]  
&\widehat{\varphi}_{h}(t) := \varphi_{n} + \frac{\varphi_{n+1}-\varphi_{n}}{h}(t-nh), 
\label{hat2} 
\\[2mm]  
&\widehat{v}_{h}(t) := v_{n} + \frac{v_{n+1}-v_{n}}{h}(t-nh)  
\label{hat3} 
\end{align}
for $t \in [nh, (n+1)h]$, $n = 0, ..., N-1$, 
and 
\begin{align}
&\overline{u}_{h}(t) := u_{n+1},\  
\overline{\theta}_{h} (t) := \theta_{n+1},\ 
\overline{\varphi}_{h} (t) := \varphi_{n+1},\ 
\underline{\varphi}_{h} (t) := \varphi_{n},\ \label{line1}   
\\[2mm]
&\overline{v}_{h} (t) := v_{n+1},\ 
\overline{z}_{h} (t) := z_{n+1},\ 
\overline{f}_{h}(t) := f_{n+1}  
\label{line2}   
\end{align}
for \ $t \in (nh, (n+1)h]$, $n=0, ..., N-1$, 
and we rewrite \ref{Pn} as  
\begin{equation*}\tag*{(P)$_{h}$}\label{Ph}
     \begin{cases}
          (\widehat{u}_{h})_{t} + \ell( \widehat{\varphi}_{h})_{t} 
         - \eta\Delta\overline{\theta}_{h} = \overline{f}_h    
         & \mbox{in}\ \Omega\times(0, T), 
 \\[2mm]
         \overline{z}_{h} + \overline{v}_{h} 
         + a(\cdot)\underline{\varphi}_{h} - J\ast\underline{\varphi}_{h} 
         + \beta(\overline{\varphi}_{h}) + \pi(\overline{\varphi}_{h}) 
         = \ell\overline{\theta}_{h} 
         & \mbox{in}\ \Omega\times(0, T), 
\\[2mm]
         \overline{z}_{h} = (\widehat{v}_{h})_{t},\ \overline{v}_{h} = (\widehat{\varphi}_{h})_{t} 
         & \mbox{in}\ \Omega\times(0, T), 
 \\[2mm]
         \overline{u}_{h} = \mbox{\rm Ln$_{\ep}$}(\overline{\theta}_{h})   
         & \mbox{in}\ \Omega\times(0, T),  
 \\[2mm]
         \partial_{\nu}\overline{\theta}_{h} = 0                                   
         & \mbox{on}\ \partial\Omega\times(0, T),
 \\[2mm]
        \widehat{u}_{h}(0) = \ep\theta_{0} + \ln \theta_{0},\ 
        \widehat{\varphi}_{h}(0) = \varphi_0,\    
        \widehat{v}_{h}(0) = v_0                                     
         & \mbox{in}\ \Omega.  
     \end{cases}
 \end{equation*}
Here we can check directly the following identities by \eqref{hat1}-\eqref{line2}: 
\begin{align}
&\|\widehat{u}_{h}\|_{L^{\infty}(0, T; H)} 
= \max\{ \|u_{0}\|_{H}, \|\overline{u}_{h}\|_{L^{\infty}(0, T; H)} \}, 
\label{tool1} \\[1mm] 
&\|\widehat{\varphi}_{h}\|_{L^{\infty}(0, T; L^{\infty}(\Omega))} 
= \max\{ \|\varphi_{0}\|_{L^{\infty}(\Omega)}, 
                            \|\overline{\varphi}_{h}\|_{L^{\infty}(0, T; L^{\infty}(\Omega))} \},  
\label{tool2}  \\[1mm] 
&\|\widehat{v}_{h}\|_{L^{\infty}(0, T; L^{\infty}(\Omega))} 
= \max\{ \|v_{0}\|_{L^{\infty}(\Omega)}, 
                                     \|\overline{v}_{h}\|_{L^{\infty}(0, T; L^{\infty}(\Omega))} \},  
\label{tool3}  \\[1mm]  
&\|\overline{u}_{h} - \widehat{u}_{h}\|_{L^2(0, T; V^*)}^2 
= \frac{h^2}{3}\|(\widehat{u}_{h})_{t}\|_{L^2(0, T; V^*)}^2, 
\label{tool4}  \\[1mm] 
&\|\overline{\varphi}_{h} - \widehat{\varphi}_{h}\|_{L^{\infty}(0, T; L^{\infty}(\Omega)} 
= h\|(\widehat{\varphi}_{h})_{t}\|_{L^{\infty}(0, T; L^{\infty}(\Omega))} 
= h\|\overline{v}_{h}\|_{L^{\infty}(0, T; L^{\infty}(\Omega))}, 
\label{tool5} \\[1mm]
&\|\overline{v}_{h} - \widehat{v}_{h}\|_{L^2(0, T; H)}^2 
= \frac{h^2}{3}\|(\widehat{v}_{h})_{t}\|_{L^2(0, T; H)}^2 
= \frac{h^2}{3}\|\overline{z}_{h}\|_{L^2(0, T; H)}^2,     
\label{tool6}  \\[1mm] 
&\underline{\varphi}_{h} = \overline{\varphi}_{h} - h(\widehat{\varphi}_{h})_{t}. 
\label{tool7}
\end{align}

\medskip

\begin{remark}\label{Keypoints}
In the case that $\zeta = 1$, 
$\alpha(\theta) = \theta$, $A\varphi = a(\cdot)\varphi - J\ast\varphi$, 
to establish the $L^2(0, T; H^2(\Omega))$-estimate for $\theta$ 
is a key to prove existence for \eqref{E}. 
On the other hand, in this paper,  
to derive the $L^{\infty}(0, T; H^2(\Omega))$-estimate for 
$\int_{0}^{t} \theta(s)\,ds$
is a key to show existence for \eqref{P}.    
More precisely, 
to obtain an estimate for $h\max_{1 \leq m \leq N} 
                          \left\|\sum_{n=0}^{m-1} \theta_{n+1} \right\|_{H^2(\Omega)}$ 
(see Lemma \ref{esth5}) 
is a key to prove existence for \eqref{P}. 
Also, to establish Cauchy's criteria for solutions of \ref{Ph} and \ref{Pep}, 
respectively, is a key to show existence for \eqref{P} 
(see Lemmas \ref{CauchyforPh} and \ref{CauchyforPep}).   
\end{remark}

\bigskip

This paper is organized as follows. 
Section \ref{Sec2} contains 
the proof of existence for the discrete problem \ref{Pn}. 
In Section \ref{Sec3} we deduce uniform estimates 
for \ref{Ph}. 
In Section \ref{Sec4} 
we derive Cauchy's criterion for solutions of \ref{Ph} 
and we prove existence of weak solutions to \ref{Pep} 
by passing to the limit in \ref{Ph} as $h \searrow 0$. 
Section \ref{Sec5} 
establishes uniform estimates and Cauchy's criterion for solutions of \ref{Pep} 
and show existence of weak solutions to \eqref{P} 
by passing to the limit in \ref{Pep} as $\ep \searrow 0$.   

\vspace{10pt}

\section{Existence for the discrete problem}\label{Sec2}
In this section we will show Theorem \ref{maintheorem3}.     
\begin{lem}\label{PT}
 Let $\gamma : D(\gamma) \subset \mathbb{R} \to \mathbb{R}$ 
 be a multi-valued maximal monotone function.     
 Then
    \begin{align*}
    \bigl(-\Delta u, \gamma_{\tau}(u)\bigr)_{H} \geq 0 
    \quad \mbox{for all}\ 
    u\in W\ \mbox{and all}\ \tau > 0,  
    \end{align*}
 where 
 $\gamma_{\tau}$ is the Yosida approximation of $\gamma$ on $\mathbb{R}$. 
 In particular, if $\gamma : D(\gamma) \subset \mathbb{R} \to \mathbb{R}$ 
 is a single-valued maximal monotone function, 
 then 
 \begin{align*}
    \bigl(-\Delta u, \gamma(u)\bigr)_{H} \geq 0 
    \quad \mbox{for all}\ 
    u\in W\ \mbox{with}\ \gamma(u)\in H. 
    \end{align*}
 \end{lem}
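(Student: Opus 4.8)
The plan is to combine Green's formula (exploiting the Neumann condition built into $W$) with the chain rule for Lipschitz compositions and the monotonicity of the Yosida approximation, and then to pass to the limit $\tau \searrow 0$ for the single-valued case. First I would fix $u \in W$ and $\tau > 0$. Since $\gamma_{\tau}$ is Lipschitz continuous on $\mathbb{R}$ and $u \in H^2(\Omega) \subset V$, the composition $\gamma_{\tau}(u)$ belongs to $V$; moreover, by the chain rule for the composition of a Lipschitz function with an $H^1$-function (Rademacher's theorem guarantees that $\gamma_{\tau}'$ exists a.e.), one has
\[
\nabla(\gamma_{\tau}(u)) = \gamma_{\tau}'(u)\,\nabla u \quad \text{a.e.\ in }\ \Omega.
\]
Since $u \in W$ satisfies $\partial_{\nu}u = 0$ a.e.\ on $\partial\Omega$ and $\gamma_{\tau}(u) \in V$, Green's formula then yields
\[
\bigl(-\Delta u, \gamma_{\tau}(u)\bigr)_{H} = \int_{\Omega} \nabla u \cdot \nabla(\gamma_{\tau}(u))\,dx = \int_{\Omega} \gamma_{\tau}'(u)\,|\nabla u|^2\,dx.
\]
Because $\gamma_{\tau}$ is the Yosida approximation of the monotone function $\gamma$, it is non-decreasing, so $\gamma_{\tau}'(u) \geq 0$ a.e.; hence the last integral is nonnegative, which proves the first assertion.

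For the single-valued case, I would pass to the limit as $\tau \searrow 0$. Let $u \in W$ satisfy $\gamma(u) \in H$, so that $u(x) \in D(\gamma)$ for a.e.\ $x \in \Omega$. The standard properties of the Yosida approximation give $\gamma_{\tau}(r) \to \gamma(r)$ and $|\gamma_{\tau}(r)| \leq |\gamma(r)|$ for every $r \in D(\gamma)$; therefore $\gamma_{\tau}(u) \to \gamma(u)$ a.e.\ in $\Omega$ with $|\gamma_{\tau}(u)| \leq |\gamma(u)| \in L^2(\Omega)$, and the dominated convergence theorem yields $\gamma_{\tau}(u) \to \gamma(u)$ in $H$. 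Passing to the limit in the inequality already established,
\[
\bigl(-\Delta u, \gamma(u)\bigr)_{H} = \lim_{\tau \searrow 0} \bigl(-\Delta u, \gamma_{\tau}(u)\bigr)_{H} \geq 0,
\]
which is the desired conclusion.

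The main obstacle is the rigorous justification of the chain rule $\nabla(\gamma_{\tau}(u)) = \gamma_{\tau}'(u)\,\nabla u$ for the merely Lipschitz, non-smooth function $\gamma_{\tau}$, together with the pointwise sign $\gamma_{\tau}'(u) \geq 0$ a.e.; once this identity is in place, both the Green's-formula computation and the limit passage are routine.
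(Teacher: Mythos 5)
Your proof is correct, and it follows the same two-step skeleton as the paper's proof --- first the inequality for the Yosida approximation, then the passage to the limit $\tau \searrow 0$ via $\gamma_{\tau}(u) \to \gamma(u)$ in $H$ --- but where the paper argues by citation, you argue directly. For the first inequality the paper simply invokes Okazawa's result (his Theorem 3 with $a=b=0$), whereas you derive it from Green's formula, the Neumann condition built into $W$, and the chain rule $\nabla(\gamma_{\tau}(u)) = \gamma_{\tau}'(u)\,\nabla u$ for a Lipschitz outer function. That chain rule is indeed a theorem (Stampacchia, Marcus--Mizel), but you should state the standard caveat explicitly: $\gamma_{\tau}'(u(x))$ may fail to exist for $x$ in the preimage of the Rademacher null set, and there the identity holds with the product interpreted as $0$, which is legitimate because $\nabla u = 0$ a.e.\ on level sets of $u$; since $\gamma_{\tau}' \geq 0$ wherever it exists, the sign conclusion survives this convention. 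Alternatively one can mollify $\gamma_{\tau}$ by smooth nondecreasing functions, apply the classical chain rule, and let the mollification parameter tend to zero, which avoids the nonsmooth chain rule entirely. For the second step the paper cites Br\'ezis (Proposition 2.6) or Showalter for the convergence of the Yosida approximation on the domain; your dominated-convergence argument, using $|\gamma_{\tau}(r)| \leq |\gamma(r)|$ and $\gamma_{\tau}(r) \to \gamma(r)$ for $r \in D(\gamma)$, is essentially a proof of that cited fact in the $L^2(\Omega)$ setting, so it is sound. In short: your route buys a self-contained, transparent proof of the sign condition at the cost of the chain-rule technicality; the paper's route buys brevity by outsourcing exactly those two points to the literature.
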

 \begin{proof}
 From Okazawa \cite[Proof of Theorem 3 with $a=b=0$]{O-1983}   
 we have that 
 \[
 \bigl(-\Delta u, \gamma_{\tau}(u)\bigr)_{H} \geq 0 
    \quad \mbox{for all}\ 
    u\in W\ \mbox{and all}\ \tau>0.
 \]
 In the case that $\gamma : D(\gamma) \subset \mathbb{R} \to \mathbb{R}$ 
 is a single-valued maximal monotone function,  
 since it holds that $\gamma_{\tau}(u) \to \gamma(u)$ in $H$  
 as $\tau \searrow 0$ if $\gamma(u) \in H$ 
 (see e.g., {\cite[Proposition 2.6]{Brezis}} 
 or \cite[Theorem IV.1.1]{S-1997}),   
 we can show the second inequality.       
 \end{proof}
\begin{lem}\label{elliptic1}
For all $g \in H$, $\ep \in (0, 1]$, $h > 0$ 
there exists a unique solution $\theta \in W$ of the equation 
\[
\ep\theta + \ln \theta- \eta h \Delta \theta = g 
\quad \mbox{a.e.\ on}\ \Omega.  
\] 
\end{lem}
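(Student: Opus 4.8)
The plan is to solve the equation by Yosida regularization of the singular logarithmic term, to pass to the limit using the uniform estimates provided by Lemma \ref{PT}, and to get uniqueness from strict monotonicity. First I would set $\gamma(r) := \ep r + \ln r$ and $\psi(r) := \ln r$ on $(0, +\infty)$, noting that $\gamma = \partial \widehat{\gamma}$ with $\widehat{\gamma}(r) = \frac{\ep}{2}r^2 + r\ln r - r + 1$ for $r > 0$ (and $+\infty$ for $r \le 0$), which is proper, lower semicontinuous and convex; hence the pointwise realization of $\gamma$ on $H$ is maximal monotone and strongly monotone with constant $\ep$. The equation then reads $-\eta h\,\Delta\theta + \ep\theta + \psi(\theta) = g$. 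I would keep the $\ep\theta$ term explicit and replace only the singular part $\psi$ by its Yosida approximation $\psi_{\lambda}$, which is monotone, Lipschitz, and satisfies $\psi_{\lambda}(1) = 0$ (since $\psi(1) = 0$). The regularized problem is to find $\theta_{\lambda} \in W$ with $\ep\theta_{\lambda} + \psi_{\lambda}(\theta_{\lambda}) - \eta h\,\Delta\theta_{\lambda} = g$.

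For the regularized problem I would argue by the Browder--Minty theorem. The operator $V \to V^{*}$ sending $\theta \mapsto \ep\theta + \psi_{\lambda}(\theta) - \eta h\,\Delta\theta$ is monotone, bounded and hemicontinuous, and it is coercive: the $\ep$-term and the Dirichlet part control $\|\theta\|_{V}^2$, while $(\psi_{\lambda}(\theta), \theta)_{H}$ is bounded below uniformly because convexity gives $\psi_{\lambda}(r)\,r \ge \widehat{\psi}_{\lambda}(r) - \widehat{\psi}_{\lambda}(0) \ge -1$, where $\widehat{\psi}_{\lambda}$ is the Moreau--Yosida regularization of $\widehat{\psi}(r) = r\ln r - r + 1 \ge 0$. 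Thus there is a solution $\theta_{\lambda} \in V$, unique by the strong monotonicity coming from $\ep$. Since $-\eta h\,\Delta\theta_{\lambda} = g - \ep\theta_{\lambda} - \psi_{\lambda}(\theta_{\lambda}) \in H$, elliptic regularity for the Neumann Laplacian yields $\theta_{\lambda} \in W$.

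Next I would derive estimates uniform in $\lambda$. Testing with $\theta_{\lambda}$ and using the lower bound on $(\psi_{\lambda}(\theta_{\lambda}), \theta_{\lambda})_{H}$ together with Young's inequality bounds $\theta_{\lambda}$ in $V$. Testing with $\psi_{\lambda}(\theta_{\lambda})$ and invoking Lemma \ref{PT} (applied with $\gamma = \psi$ and $\tau = \lambda$, so that $(-\Delta\theta_{\lambda}, \psi_{\lambda}(\theta_{\lambda}))_{H} \ge 0$) gives $\|\psi_{\lambda}(\theta_{\lambda})\|_{H} \le \|g\|_{H} + \ep\|\theta_{\lambda}\|_{H} \le C$, whence $\|\Delta\theta_{\lambda}\|_{H} \le C$ from the equation and $\theta_{\lambda}$ is bounded in $W$. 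I would then extract $\theta_{\lambda} \rightharpoonup \theta$ in $W$ (strongly in $V$ and $H$ by Rellich) and $\psi_{\lambda}(\theta_{\lambda}) \rightharpoonup \xi$ in $H$. Since $\psi_{\lambda}(\theta_{\lambda})$ is bounded, $\lambda\,\psi_{\lambda}(\theta_{\lambda}) \to 0$ in $H$, so the resolvents $J_{\lambda}\theta_{\lambda} = \theta_{\lambda} - \lambda\,\psi_{\lambda}(\theta_{\lambda}) \to \theta$ in $H$; demiclosedness of the maximal monotone graph of $\psi$ then identifies $\xi = \ln\theta$, forcing $\theta > 0$ a.e.\ and $\ln\theta \in H$. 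Passing to the limit in the regularized equation produces a solution $\theta \in W$ of $\ep\theta + \ln\theta - \eta h\,\Delta\theta = g$ a.e.\ on $\Omega$.

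Uniqueness follows by subtracting the equations for two solutions $\theta_1, \theta_2$, testing with $\theta_1 - \theta_2$, and using that $\ep\|\theta_1 - \theta_2\|_{H}^2 + (\ln\theta_1 - \ln\theta_2, \theta_1 - \theta_2)_{H} + \eta h\|\nabla(\theta_1 - \theta_2)\|_{H}^2 = 0$, where every term is nonnegative and the first vanishes only if $\theta_1 = \theta_2$. I expect the main obstacle to be the $\lambda$-uniform $H$-estimate for $\psi_{\lambda}(\theta_{\lambda})$, which is exactly what Lemma \ref{PT} is designed to supply, together with the limit identification that controls the singularity of the logarithm and guarantees $\theta > 0$ a.e.\ with $\ln\theta \in H$. (Alternatively, one could bypass the explicit regularization by applying the maximal monotone perturbation/sum theorem of Okazawa directly, using Lemma \ref{PT} to establish maximality of $-\eta h\Delta + \gamma$ on $H$; the regularization route above is, however, self-contained.)
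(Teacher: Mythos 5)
Your proposal is correct and follows essentially the same route as the paper: Yosida-regularize only the logarithm while keeping the $\ep\theta$ term, solve the regularized problem by surjectivity of a monotone, continuous, coercive operator from $V$ to $V^{*}$ plus elliptic regularity, derive $\lambda$-uniform $V$- and $H$-estimates (the latter by testing with the Yosida approximation and invoking Lemma \ref{PT}), pass to the limit with the compact embedding $W \hookrightarrow V$, identify $\xi = \ln\theta$ by maximal monotonicity, and obtain uniqueness from the monotonicity of $\ln$. The only cosmetic differences are in bookkeeping: you get the first uniform estimate by testing with $\theta_{\lambda}$ and the lower bound $r\,\psi_{\lambda}(r) \geq -1$, where the paper tests with $\theta_{\tau}-1$ using $\ln_{\tau}(1)=0$, and you identify the limit via resolvent convergence and demiclosedness, where the paper uses the inner-product convergence criterion of Barbu's Lemma 1.3.
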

\begin{proof}
Let $\tau > 0$ and 
let $\ln_{\tau}$ be the Yosida approximation of $\ln$ on $\mathbb{R}$. 
Moreover, we define the operator $\Phi : V \to V^*$ as 
\begin{equation*}
\langle \Phi\theta, w \rangle_{V^{*}, V} 
:= (\ep\theta + \ln_{\tau} \theta, w)_{H} 
+ \eta h\int_{\Omega} \nabla\theta \cdot \nabla w  
\quad \mbox{for}\ \theta, w \in V. 
\end{equation*}
Then we can confirm that this operator is monotone, continuous and coercive 
for all $\ep \in (0, 1]$ and all $\tau, h >0$.  
Indeed, it follows from the monotonicity and the Lipschitz continuity of $\ln_{\tau}$ that 
\begin{align*}
\langle\Phi\theta-\Phi\overline{\theta}, \theta-\overline{\theta}\rangle_{V^{*}, V} 
&=  \ep\|\theta-\overline{\theta}\|_{H}^2 
     + \int_{\Omega} 
          (\ln_{\tau} \theta - \ln_{\tau}\overline{\theta})(\theta-\overline{\theta})
   + \eta h\int_{\Omega} |\nabla(\theta-\overline{\theta})|^2 
\\
&\geq \min\{\ep, \eta h\}\|\theta-\overline{\theta}\|_{V}^2,     
\end{align*} 
\begin{align*}
|\langle \Phi\theta - \Phi\overline{\theta}, w \rangle_{V^{*}, V}| 
&= \left| 
    \ep\int_{\Omega} (\theta - \overline{\theta})w
    + \int_{\Omega} (\ln_{\tau}(\theta) - \ln_{\tau}(\overline{\theta}))w
   + \eta h\int_{\Omega} \nabla(\theta-\overline{\theta}) \cdot \nabla w   
     \right|
\\ 
&\leq \max\{\ep, \|\ln_{\tau}'\|_{L^{\infty}(\mathbb{R})}, \eta h\}
                                                      \|\theta-\overline{\theta}\|_{V}\|w\|_{V} 
\end{align*}
and 
\begin{align*}
\langle \Phi\theta - \ln_{\tau}(0), \theta \rangle_{V^{*}, V}  
&= \ep\|\theta\|_{H}^2 
     + \int_{\Omega} 
          (\ln_{\tau} \theta - \ln_{\tau}(0))(\theta-0)
   + \eta h\int_{\Omega} |\nabla\theta|^2  
\\ 
&\geq \min\{\ep, \eta h\}\|\theta\|_{V}^2            
\end{align*}
for all $\theta, \overline{\theta}, w \in V$, $\ep \in (0, 1]$, $\tau, h > 0$. 
Hence the operator $\Phi : V \to V^*$ is surjective 
for all $h \in \left(0, \frac{1}{\|\pi'\|_{L^{\infty}(\mathbb{R})}}\right)$ 
(see e.g., \cite[p.\ 37]{Barbu2})
and then we deduce from the elliptic regularity theory that 
for all $g \in H$, $\ep \in (0, 1]$, $\tau, h > 0$ 
there exists a unique solution $\theta_{\tau} \in W$ of the equation  
\begin{equation}\label{b1}
\ep\theta_{\tau} + \ln_{\tau}(\theta_{\tau}) -\eta h \Delta \theta_{\tau} = g 
\quad \mbox{a.e.\ on}\ \Omega.    
\end{equation}
Here, noting that $|\ln_{\tau}(1)| \leq |\ln 1| = 0$, 
we see from \eqref{b1} that 
\begin{equation}\label{b2}
\ep\theta_{\tau} + \ln_{\tau}(\theta_{\tau}) - \ln_{\tau}(1) 
- \eta h \Delta \theta_{\tau} = g \quad \mbox{a.e.\ on}\ \Omega.    
\end{equation} 
Multiplying \eqref{b2} by $\theta_{\tau} - 1$ and integrating over $\Omega$ 
imply that 
\begin{align*}
\ep(\theta_{\tau}, \theta_{\tau} - 1)_{H} 
+ (\ln_{\tau}(\theta_{\tau}) - \ln_{\tau}(1), \theta_{\tau} - 1)_{H} 
+ \eta h \|\nabla \theta_{\tau}\|_{H}^2 
= (g, \theta_{\tau} - 1)_{H}    
\end{align*}
and then we derive from the monotonicity of $\ln_{\tau}$ and the Young inequality 
that 
for all $\ep \in (0, 1]$ and all $h >0$ 
there exists a constant $C_{1} = C_{1}(\ep, h) > 0$ such that 
\begin{align}\label{b3}
\|\theta_{\tau}\|_{V} \leq C_{1} 
\end{align}
for all $\tau > 0$. 
We test \eqref{b1} by $\ln_{\tau}(\theta_{\tau})$, 
use the Young inequality, \eqref{b3} and Lemma \ref{PT} 
to infer that 
for all $\ep \in (0, 1]$ and all $h >0$ 
there exists a constant $C_{2} = C_{2}(\ep, h) > 0$ such that 
\begin{align}\label{b4}
\|\ln_{\tau}(\theta_{\tau})\|_{H} \leq C_{2}
\end{align}
for all $\tau > 0$. 
Combining \eqref{b1}, \eqref{b3} and \eqref{b4} means that 
for all $\ep \in (0, 1]$ and all $h >0$ 
there exists a constant $C_{3} = C_{3}(\ep, h) > 0$ such that 
\begin{align}\label{b5}
\|\theta_{\tau}\|_{W} \leq C_{3}
\end{align}
for all $\tau > 0$. 
Thus, owing to \eqref{b4}, \eqref{b5} 
and the compact embedding $W \hookrightarrow V$, 
there exist some functions $\theta \in W$, $\xi \in H$ such that 
\begin{align}
& \theta_{\tau} \to \theta \quad \mbox{weakly in}\ W, 
\label{b6} \\  
& \theta_{\tau} \to \theta \quad \mbox{strongly in}\ V, 
\label{b7} \\
& \ln_{\tau}(\theta_{\tau}) \to \xi \quad \mbox{weakly in}\ H \label{b8}
\end{align}
as $\tau = \tau_{j} \searrow 0$. 
We have from \eqref{b7} and \eqref{b8} that 
\begin{equation*}
(\ln_{\tau}(\theta_{\tau}), \theta_{\tau})_{H} 
\to (\xi, \theta)_{H} 
\end{equation*}
as $\tau = \tau_{j} \searrow 0$, 
which yields that 
\begin{equation}\label{b9}
\xi = \ln \theta \quad \mbox{a.e.\ on}\ \Omega  
\end{equation}
(see e.g., \cite[Lemma 1.3, p.\ 42]{Barbu1}).   
Therefore it follows from \eqref{b1}, \eqref{b6}, \eqref{b8}, and \eqref{b9} that 
\begin{equation}\label{b10}
\ep\theta + \ln \theta- \eta h \Delta \theta = g 
\quad \mbox{a.e.\ on}\ \Omega.  
\end{equation}
Moreover, we can prove uniqueness of solutions to \eqref{b10} 
by the monotonicity of $\ln$. 
\end{proof}
\begin{lem}\label{elliptic2}
For all $g \in H$ and all $h \in (0, \min\{1, 1/\|\pi'\|_{L^{\infty}(\mathbb{R})}\})$
there exists a unique solution $\varphi \in H$ of the equation 
\[
\varphi + h\varphi + h^{2} \beta(\varphi) + h^{2} \pi(\varphi) = g 
\quad \mbox{a.e.\ on}\ \Omega.  
\]
\end{lem}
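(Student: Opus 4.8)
The plan is to exploit that, unlike in Lemma \ref{elliptic1}, no differential operator acts on $\varphi$, so the equation is a \emph{pointwise} scalar equation and the problem reduces to inverting a single real-valued map. I would define $G : \mathbb{R} \to \mathbb{R}$ by
\[
G(r) := (1+h)r + h^{2}\beta(r) + h^{2}\pi(r),
\]
so that the asserted equation reads $G(\varphi(x)) = g(x)$ for a.e.\ $x \in \Omega$. First I would check strong monotonicity: for $r, s \in \mathbb{R}$, using the monotonicity of $\beta$ and the bound $(\pi(r)-\pi(s))(r-s) \geq -\|\pi'\|_{L^{\infty}(\mathbb{R})}(r-s)^{2}$ coming from (C3), one obtains
\[
(G(r)-G(s))(r-s) \geq \bigl[(1+h) - h^{2}\|\pi'\|_{L^{\infty}(\mathbb{R})}\bigr](r-s)^{2}.
\]
Since $h\|\pi'\|_{L^{\infty}(\mathbb{R})} < 1$ forces $h^{2}\|\pi'\|_{L^{\infty}(\mathbb{R})} < h$, the bracketed constant exceeds $1$; hence $G$ is strongly monotone with modulus greater than $1$.

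Next I would show that $G$ is a continuous bijection of $\mathbb{R}$ with globally Lipschitz inverse. Continuity is immediate since $\beta$ is continuous (being locally Lipschitz by (C2)) and $\pi$ is Lipschitz. For surjectivity, note that $\widehat{\beta}(0)=0$ and $\beta=\partial\widehat{\beta}$ give $\beta(0)=0$, so by monotonicity $\beta(r)\geq 0$ for $r\geq 0$ and $\beta(r)\leq 0$ for $r\leq 0$; combined with the at-most-linear growth of $\pi$, the leading term $(1+h)r$ dominates and yields $G(r)\to\pm\infty$ as $r\to\pm\infty$. Being continuous and strictly increasing, $G$ is therefore bijective, and the monotonicity estimate gives $|G^{-1}(p)-G^{-1}(q)|\leq |p-q|$, i.e.\ $G^{-1}$ is $1$-Lipschitz. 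I would then set $\varphi := G^{-1}\circ g$. This is measurable (a Lipschitz function composed with $g\in H$) and belongs to $H$ because $|G^{-1}(g(x))|\leq |G^{-1}(0)| + |g(x)|$, whence $\|\varphi\|_{H}\leq |\Omega|^{1/2}\,|G^{-1}(0)| + \|g\|_{H} < +\infty$. By construction $G(\varphi)=g$ a.e.\ on $\Omega$, which is exactly the claimed equation; uniqueness follows at once by testing the difference of two solutions with $\varphi_{1}-\varphi_{2}$ and using the pointwise strong monotonicity.

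The step I expect to require the most care is the membership $\varphi\in H$. Because (C2) supplies only \emph{local} Lipschitz continuity of $\beta$, the Nemytskii map $\varphi\mapsto\beta(\varphi)$ need not send $H$ into $H$, so one cannot directly build a maximal monotone operator on $H$ from $\beta$ and invoke its surjectivity. The pointwise inversion circumvents this precisely because the linear term $(1+h)r$ renders $G^{-1}$ globally $1$-Lipschitz regardless of the growth of $\beta$: the unbounded behaviour of $\beta$ is absorbed on the left-hand side and never enters the solution. As an alternative consistent with the proof of Lemma \ref{elliptic1}, one could replace $\beta$ by its Yosida approximation $\beta_{\tau}$, solve the resulting globally Lipschitz (hence surjective, by strong monotonicity in $H$) equation, derive a $\tau$-uniform $H$-bound from the same coercivity estimate, and pass to the limit $\tau\searrow 0$ via $\beta_{\tau}(\varphi)\to\beta(\varphi)$; but since here no Laplacian couples the values of $\varphi$, the direct scalar inversion is the shorter route.
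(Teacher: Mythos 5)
Your proof is correct, and it takes a genuinely different route from the paper: the paper gives no argument for this lemma at all, deferring instead to \cite[Lemma 2.1]{K7}, i.e.\ to a result presumably established by monotone-operator and fixed-point methods in the Hilbert space $H$. You instead exploit that, with no differential operator present, the equation decouples over the points of $\Omega$, and you invert the scalar map $G(r)=(1+h)r+h^{2}\beta(r)+h^{2}\pi(r)$ pointwise. The steps all check out: monotonicity of $\beta$ and the Lipschitz bound on $\pi$ give
\[
(G(r)-G(s))(r-s)\geq \bigl[(1+h)-h^{2}\|\pi'\|_{L^{\infty}(\mathbb{R})}\bigr](r-s)^{2},
\]
and $h<\min\bigl\{1,1/\|\pi'\|_{L^{\infty}(\mathbb{R})}\bigr\}$ indeed forces the bracket above $1$; continuity of $\beta$ and $\pi$ together with this coercivity make $G$ a strictly increasing bijection of $\mathbb{R}$ with $1$-Lipschitz inverse; hence $\varphi:=G^{-1}\circ g$ is measurable, lies in $H$ because $|G^{-1}(g(x))|\leq|G^{-1}(0)|+|g(x)|$ and $\Omega$ is bounded, solves the equation a.e.\ on $\Omega$, and is the unique solution by injectivity of $G$ (equivalently, by the pointwise strong monotonicity). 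What your route buys is a short, self-contained, elementary proof; what the operator-theoretic route buys is a template that survives when a spatial operator couples the unknown, as in Lemma \ref{elliptic1}, where pointwise inversion is impossible and one must go through Yosida approximation, surjectivity of monotone coercive operators, and elliptic regularity. One side remark in your proposal is inaccurate, though it plays no role in your argument: the $L^{2}(\Omega)$-realization of $\beta$ with domain $\bigl\{u\in H \ :\ \beta(u)\in H\bigr\}$ is maximal monotone regardless of the growth of $\beta$ (maximality does not require the Nemytskii map to send $H$ into $H$), so one \emph{can} build a maximal monotone operator on $H$ from $\beta$ and perturb it by the Lipschitz term $h^{2}\pi$; the resolvent-plus-contraction argument that results is essentially your scalar inversion carried out in $H$.
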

\begin{proof}
We can obtain this lemma in reference to \cite[Lemma 2.1]{K7}. 
\end{proof}

\begin{prth1.3}
We can rewrite \ref{Pn} as 
\begin{equation*}\tag*{(Q)$_{n}$}\label{Qn}
     \begin{cases}
         \ep\theta_{n+1} + \ln \theta_{n+1} - \eta h\Delta\theta_{n+1} 
         = hf_{n+1} + \ell\varphi_{n} - \ell\varphi_{n+1} + \ep\theta_{n} + \ln \theta_{n},  
     \\[5mm] 
         \varphi_{n+1} + h\varphi_{n+1} + h^2 \beta(\varphi_{n+1}) + h^2 \pi(\varphi_{n+1}) 
         \\[2mm]
         = \ell h^2 \theta_{n+1} + \varphi_{n} + hv_{n} + h\varphi_{n}  
            -h^2 a(\cdot)\varphi_{n} + h^2 J\ast\varphi_{n}.  
     \end{cases}
 \end{equation*}
To show Theorem \ref{maintheorem3} 
it suffices to derive existence and uniqueness
of solutions to \ref{Qn} in the case that n = 0. 
Let $h \in (0, \min\{1, 1/\|\pi'\|_{L^{\infty}(\mathbb{R})}\})$. 
Then we see from Lemma \ref{elliptic1} that 
for all $\varphi \in H$ there exists a unique function $\overline{\theta} \in W$ such that  
\begin{align}\label{c1}
\ep\overline{\theta} + \ln \overline{\theta} - \eta h\Delta\overline{\theta}
         = hf_{1} + \ell\varphi_{0} - \ell\varphi +  \ep\theta_{0} + \ln \theta_{0}.  
\end{align}
Also, by Lemma \ref{elliptic2} it holds that 
for all $\theta \in H$ there exists a unique function $\overline{\varphi}$ 
such that  
\begin{align}\label{c2}
\overline{\varphi} + h\overline{\varphi} 
+ h^2 \beta(\overline{\varphi}) + h^2 \pi(\overline{\varphi}) 
= \ell h^2 \theta + \varphi_{0} + hv_{0} + h\varphi_{0}  
            -h^2 a(\cdot)\varphi_{0} + h^2 J\ast\varphi_{0}. 
\end{align}
Hence we can define ${\cal A} : H \to H$, ${\cal B} : H \to H$ 
and ${\cal S} : H \to H$ as   
\[
{\cal A}\varphi = \overline{\theta},\ {\cal B}\theta = \overline{\varphi} 
\quad \mbox{for}\ \varphi, \theta \in H
\]
and 
\[
{\cal S} = {\cal B} \circ {\cal A}, 
\]
respectively. 
Now we let $\varphi, \widetilde{\varphi} \in H$. 
Then we deduce from \eqref{c1} that 
\begin{align*}
&\ep\|{\cal A}\varphi - {\cal A}\widetilde{\varphi}\|_{H}^2 
+ (\ln ({\cal A}\varphi) - \ln ({\cal A}\widetilde{\varphi}), 
                                                      {\cal A}\varphi - {\cal A}\widetilde{\varphi})_{H} 
+ \eta h\|\nabla ({\cal A}\varphi - {\cal A}\widetilde{\varphi})\|_{H}^2 
\\ 
&= -\ell (\varphi - \widetilde{\varphi}, {\cal A}\varphi - {\cal A}\widetilde{\varphi})_{H} 
\\ 
&\leq \ell \|\varphi - \widetilde{\varphi}\|_{H}
                                                \|{\cal A}\varphi - {\cal A}\widetilde{\varphi}\|_{H},  
\end{align*}
and hence the monotonicity of $\ln$ leads to the inequality 
\begin{equation}\label{c3}
\|{\cal A}\varphi - {\cal A}\widetilde{\varphi}\|_{H} 
\leq \frac{\ell}{\ep}\|\varphi - \widetilde{\varphi}\|_{H}. 
\end{equation}
Also, we have from \eqref{c2} and (C3) that  
\begin{align*}
&(1+h)\|{\cal S}\varphi - {\cal S}\widetilde{\varphi}\|_{H}^2 
+ h^2 (\beta({\cal S}\varphi) - \beta({\cal S}\widetilde{\varphi}), 
                                        {\cal S}\varphi - {\cal S}\widetilde{\varphi})_{H} 
\\
&= \ell h^2 ({\cal A}\varphi - {\cal A}\widetilde{\varphi}, 
                              {\cal S}\varphi - {\cal S}\widetilde{\varphi})_{H} 
     - h^2 (\pi({\cal S}\varphi) - \pi({\cal S}\widetilde{\varphi}), 
                                        {\cal S}\varphi - {\cal S}\widetilde{\varphi})_{H} 
\\
&\leq \ell h^2 \|{\cal A}\varphi - {\cal A}\widetilde{\varphi}\|_{H}
                                 \|{\cal S}\varphi - {\cal S}\widetilde{\varphi}\|_{H} 
         + \|\pi'\|_{L^{\infty}(\mathbb{R})}h^2
                                  \|{\cal S}\varphi - {\cal S}\widetilde{\varphi}\|_{H}^2.  
\end{align*}
Thus it follows from the monotonicity of $\beta$ that   
\begin{equation}\label{c4}
\|{\cal S}\varphi - {\cal S}\widetilde{\varphi}\|_{H} 
\leq \frac{\ell h^2}{1 + h - \|\pi'\|_{L^{\infty}(\mathbb{R})}h^2}
                                                 \|{\cal A}\varphi - {\cal A}\widetilde{\varphi}\|_{H}. 
\end{equation}
Therefore we combine \eqref{c3} and \eqref{c4} to obtain that   
\begin{equation*}
\|{\cal S}\varphi - {\cal S}\widetilde{\varphi}\|_{H} 
\leq \frac{\ell^2 h^2}{\ep(1 + h - \|\pi'\|_{L^{\infty}(\mathbb{R})}h^2)}
                                                                       \|\varphi - \widetilde{\varphi}\|_{H}.   
\end{equation*}
Then for all $\ep \in (0, 1]$ there exists 
$h_{00\ep} \in (0, \min\{1, 1/\|\pi'\|_{L^{\infty}(\mathbb{R})}\})$ 
such that 
\[
\frac{\ell^2 h^2}{\ep(1 + h - \|\pi'\|_{L^{\infty}(\mathbb{R})}h^2)} \in (0, 1). 
\]
Hence ${\cal S} : H \to H$ is a contraction mapping in $H$ 
for all $\ep \in (0, 1]$ and all $h \in (0, h_{00\ep})$,  
and then the Banach fixed-point theorem implies that 
for all $\ep \in (0, 1]$ and all $h \in (0, h_{00\ep})$ 
there exists a unique function $\varphi_{1} \in H$ 
such that $\varphi_{1} = {\cal S}\varphi_{1} \in H$. 
Thus, for all $\ep \in (0, 1]$ and all $h \in (0, h_{00\ep})$, 
putting $\theta_{1} := {\cal A}\varphi_{1} \in W$, 
we see that there exists a unique pair $(\theta_{1}, \varphi_{1}) \in H^2$ 
satisfying \ref{Qn} in the case that $n = 0$.   
Now we confirm that $\varphi_{1} \in L^{\infty}(\Omega)$. 
Let $\ep \in (0, 1]$ and let $h \in (0, h_{00\ep})$. 
Then, since 
$g_{1} := \ell h^2 \theta_{1} + \varphi_{0} + hv_{0} + h\varphi_{0}  
            -h^2 a(\cdot)\varphi_{0} + h^2 J\ast\varphi_{0} \in L^{\infty}(\Omega)$ 
by $\theta_{1} \in W$, $W \subset L^{\infty}(\Omega)$ and (C1),  
we test the second equation in (Q)$_{0}$ by $\varphi_{1}(x)$ 
and use the Young inequality, (C3) to infer that  
\begin{align*}
&|\varphi_{1}(x)|^2 + h|\varphi_{1}(x)|^2 + h^2 \beta(\varphi_{1}(x))\varphi_{1}(x) 
\\ \notag 
&= g_{1}(x)\varphi_{1}(x) - h^2 (\pi(\varphi_{1}(x))-\pi(0))\varphi_{1}(x) 
                                                                         - h^2 \pi(0)\varphi_{1}(x) 
\\ \notag 
&\leq \frac{1}{2}\|g_{1}\|_{L^{\infty}(\Omega)}^2 
         + \frac{1}{2}|\varphi_{1}(x)|^2 
         + h^2 \|\pi'\|_{L^{\infty}(\mathbb{R})}|\varphi_{1}(x)|^2 
         + \frac{1}{2} h^2 |\varphi_{1}(x)|^2 
         + \frac{1}{2} h^2 |\pi(0)|^2.  
\end{align*}
Therefore, owing to the monotonicity of $\beta$, 
for all $\ep \in (0, 1]$ there exists $h_{0\ep} \in (0, h_{00\ep})$ such that 
for all $h \in (0, h_{0\ep})$ there exists a constant $C_{1} = C_{1}(\ep, h) > 0$ 
such that 
$|\varphi_{1}(x)| \leq C_{1}$ for a.a.\ $x \in \Omega$. 
\qed
\end{prth1.3}

\vspace{10pt}
 
\section{Uniform estimates for the discrete problem}\label{Sec3}

In this section we will establish a priori estimates for \ref{Ph}.  
\begin{lem}\label{esth1}
Let $h_{0\ep}$ be as in Theorem \ref{maintheorem3}. 
Then there exists a constant $C>0$ depending on the data 
such that 
for all $\ep \in (0, 1]$ there exists $h_{1\ep} \in (0, h_{0\ep})$ 
such that 
\begin{align*}
&\ep\|\overline{\theta}_{h}\|_{L^{\infty}(0, T; H)}^2 
+ \|\overline{\theta}_{h}\|_{L^{\infty}(0, T; L^1(\Omega))} 
+ \|\nabla\overline{\theta}_{h}\|_{L^2(0, T; H)}^2 
+ \|\overline{\varphi}_{h}\|_{L^{\infty}(0, T; H)}^2 
+ \|\overline{v}_{h}\|_{L^{\infty}(0, T; H)}^2  
\notag \\
&\leq C
\end{align*}
for all $h \in (0, h_{1\ep})$.  
\end{lem}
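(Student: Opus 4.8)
The plan is to derive a discrete energy inequality by testing the two equations of \ref{Pn} with the discrete analogues of the natural multipliers and adding them so that the coupling terms cancel. Concretely, I would test the first equation of \ref{Pn} in $H$ with $\theta_{n+1}$ and the second equation with $v_{n+1}=(\varphi_{n+1}-\varphi_{n})/h$. In the first identity the coupling term $\ell(v_{n+1},\theta_{n+1})_{H}$ appears with a plus sign, while moving $\ell\theta_{n+1}$ to the left in the second identity produces $-\ell(\theta_{n+1},v_{n+1})_{H}$; adding the two identities makes these cancel. This cancellation is the structural reason the argument closes with only the weak control of $\theta$ asserted in the lemma, and it is the first thing I would verify.

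After multiplying by $h$ and summing over $n=0,\dots,m-1$ for an arbitrary $m\in\{1,\dots,N\}$, I would estimate each resulting term. For the parabolic part I write $u_{n+1}-u_{n}=\ep(\theta_{n+1}-\theta_{n})+(\ln\theta_{n+1}-\ln\theta_{n})$. The $\ep$-part is handled by the elementary convexity inequality $(a-b)a\ge\tfrac12(a^{2}-b^{2})$, giving $\tfrac{\ep}{2}(\|\theta_{m}\|_{H}^{2}-\|\theta_{0}\|_{H}^{2})$. For the logarithmic part I would use that $\ln$ is \emph{concave}: for concave $g$ one has $g(\theta_{n+1})-g(\theta_{n})\ge g'(\theta_{n+1})(\theta_{n+1}-\theta_{n})$, so with $g=\ln$,
\[
(\ln\theta_{n+1}-\ln\theta_{n})\theta_{n+1}\ge\tfrac{1}{\theta_{n+1}}(\theta_{n+1}-\theta_{n})\theta_{n+1}=\theta_{n+1}-\theta_{n}.
\]
Summing and integrating over $\Omega$ yields $\sum_{n}(\ln\theta_{n+1}-\ln\theta_{n},\theta_{n+1})_{H}\ge\|\theta_{m}\|_{L^{1}(\Omega)}-\|\theta_{0}\|_{L^{1}(\Omega)}$ (using $\theta>0$), which is exactly the source of the $L^{1}$-bound in the statement. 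The gradient term contributes $\eta h\sum_{n}\|\nabla\theta_{n+1}\|_{H}^{2}$ on the good side. For the hyperbolic part, $h(z_{n+1},v_{n+1})_{H}=(v_{n+1}-v_{n},v_{n+1})_{H}\ge\tfrac12(\|v_{n+1}\|_{H}^{2}-\|v_{n}\|_{H}^{2})$ gives the kinetic energy, $h\|v_{n+1}\|_{H}^{2}\ge0$ is kept, and the monotone term telescopes via convexity of $\widehat{\beta}$ into $\int_{\Omega}\widehat{\beta}(\varphi_{m})-\int_{\Omega}\widehat{\beta}(\varphi_{0})\ge-\int_{\Omega}\widehat{\beta}(\varphi_{0})$, which is finite since $\varphi_{0}\in L^{\infty}(\Omega)$.

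The remaining terms are lower-order and are bounded by Young's inequality: the nonlocal term is controlled by $\|a(\cdot)\varphi_{n}-J\ast\varphi_{n}\|_{H}\le C\|\varphi_{n}\|_{H}$ (from (C1)) and $\pi$ by its Lipschitz bound (C3), producing contributions of the form $Ch\sum_{n}(1+\|\varphi_{n}\|_{H}^{2}+\|\varphi_{n+1}\|_{H}^{2}+\|v_{n+1}\|_{H}^{2})$; the missing $\|\varphi_{m}\|_{H}^{2}$ is recovered from $\varphi_{n+1}=\varphi_{n}+hv_{n+1}$. For the forcing I would use $f\in L^{1}(0,T;L^{\infty}(\Omega))$ and pair it with the $L^{1}$-bound just obtained, $h\sum_{n}(f_{n+1},\theta_{n+1})_{H}\le\sum_{n}(h\|f_{n+1}\|_{L^{\infty}(\Omega)})\|\theta_{n+1}\|_{L^{1}(\Omega)}$, where $h\|f_{n+1}\|_{L^{\infty}(\Omega)}\le\int_{nh}^{(n+1)h}\|f(s)\|_{L^{\infty}(\Omega)}\,ds$. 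Collecting everything into a discrete energy $E_{m}$ comprising $\tfrac{\ep}{2}\|\theta_{m}\|_{H}^{2}$, $\|\theta_{m}\|_{L^{1}(\Omega)}$, $\tfrac12\|v_{m}\|_{H}^{2}$, $\tfrac12\|\varphi_{m}\|_{H}^{2}$ and $\int_{\Omega}\widehat{\beta}(\varphi_{m})$, I would close the estimate by the discrete Gronwall lemma and finally translate the uniform-in-$m$ bound into the $\overline{\theta}_{h},\overline{\varphi}_{h},\overline{v}_{h}$ norms via \eqref{line1}--\eqref{line2} (e.g.\ $h\sum_{n=0}^{N-1}\|\nabla\theta_{n+1}\|_{H}^{2}=\|\nabla\overline{\theta}_{h}\|_{L^{2}(0,T;H)}^{2}$ and $\max_{m}\|\theta_{m}\|_{L^{1}(\Omega)}=\|\overline{\theta}_{h}\|_{L^{\infty}(0,T;L^{1}(\Omega))}$).

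I expect the main obstacle to be the forcing term together with the implicit (backward-Euler) nature of the scheme: since the Gronwall coefficient multiplies $\|\theta_{n+1}\|_{L^{1}(\Omega)}$ at the current step, closing the recursion requires $h\|f_{n+1}\|_{L^{\infty}(\Omega)}$ to be strictly less than $1$ uniformly in $n$. This is precisely why the statement restricts $h$ to a subinterval $(0,h_{1\ep})$: by absolute continuity of $t\mapsto\int_{0}^{t}\|f(s)\|_{L^{\infty}(\Omega)}\,ds$ one has $\max_{1\le m\le N}\int_{(m-1)h}^{mh}\|f(s)\|_{L^{\infty}(\Omega)}\,ds\to0$ as $h\searrow0$, so choosing $h_{1\ep}$ small enough makes every coefficient small enough to absorb the implicit term and run the discrete Gronwall argument.
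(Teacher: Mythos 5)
Your proposal is correct and follows essentially the same route as the paper's proof: testing with $\theta_{n+1}$ and $v_{n+1}$ so the coupling terms cancel, obtaining the $L^1$-bound from the logarithm (your concavity inequality is equivalent to the paper's $e^{x}(x-y)\ge e^{x}-e^{y}$), telescoping $\widehat{\beta}$ via the subdifferential, and handling the forcing exactly as the paper does, by splitting off the current-step term, absorbing it using the smallness of $\int_{(m-1)h}^{mh}\|f(s)\|_{L^{\infty}(\Omega)}\,ds$ for small $h$, and closing with the discrete Gronwall lemma.
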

\begin{proof}
We can prove this lemma in reference to \cite{CC2018, K7}.  
We multiply the first equation in \ref{Pn} by $h\theta_{n+1}$ 
to derive that 
\begin{align}\label{d1}
&\frac{\ep}{2}\|\theta_{n+1}\|_{H}^2 - \frac{\ep}{2}\|\theta_{n}\|_{H}^2 
+ \frac{\ep}{2}\|\theta_{n+1}-\theta_{n}\|_{H}^2 
+ (\ln (\theta_{n+1})-\ln (\theta_{n}), \theta_{n+1})_{H}
\notag \\ 
&+ \ell h(v_{n+1}, \theta_{n+1})_{H} + \eta h\|\nabla \theta_{n+1}\|_{H}^2  
= h\int_{\Omega}f_{n+1} \theta_{n+1}.  
\end{align}
Here the inequality $e^{x} (x-y) \geq e^{x} - e^{y}$ ($x, y \in \mathbb{R}$) 
means that 
\begin{align}\label{d2}
&(\ln (\theta_{n+1})-\ln (\theta_{n}), \theta_{n+1})_{H} 
\notag \\
&= (e^{\ln (\theta_{n+1})}, \ln (\theta_{n+1})-\ln (\theta_{n}))_{H} 
\notag \\ 
&\geq \int_{\Omega} e^{\ln (\theta_{n+1})} - \int_{\Omega} e^{\ln (\theta_{n})}
= \int_{\Omega} \theta_{n+1} - \int_{\Omega} \theta_{n}. 
\end{align}
By the identity $v_{n+1} = \frac{\varphi_{n+1} - \varphi_{n}}{h}$ it holds that  
\begin{align}\label{d3}
\frac{1}{2}\|\varphi_{n+1}\|_{H}^2 - \frac{1}{2}\|\varphi_{n}\|_{H}^2 
+ \frac{1}{2}\|\varphi_{n+1} - \varphi_{n}\|_{H}^2 
= h (v_{n+1}, \varphi_{n+1})_{H}.   
\end{align}
It follows from testing the second equation in \ref{Pn} by $hv_{n+1}$ that 
\begin{align}\label{d4}
&\frac{1}{2}\|v_{n+1}\|_{H}^2 - \frac{1}{2}\|v_{n}\|_{H}^2 
+ \frac{1}{2}\|v_{n+1} - v_{n}\|_{H}^2 
+ h\|v_{n+1}\|_{H}^2 
+ (\beta(\varphi_{n+1}), \varphi_{n+1} - \varphi_{n})_{H} 
\notag \\ 
&= \ell h(\theta_{n+1}, v_{n+1})_{H} 
     - h(a(\cdot)\varphi_{n} - J\ast \varphi_{n}, v_{n+1})_{H} 
     - h(\pi(\varphi_{n+1}), v_{n+1})_{H}. 
\end{align}
Here we have from (C2) and the definition of the subdifferential that 
\begin{equation}\label{d5}
(\beta(\varphi_{n+1}), \varphi_{n+1} - \varphi_{n})_{H} 
\geq \int_{\Omega} \widehat{\beta}(\varphi_{n+1}) 
        - \int_{\Omega} \widehat{\beta}(\varphi_{n}). 
\end{equation}
Thus we combine \eqref{d1}-\eqref{d5}, 
sum up $n = 0, ..., m-1$ with $1 \leq m \leq N$, 
use (C1), (C3) and the Young inequality to infer that 
there exists a constant $C_{1} > 0$ 
such that  
\begin{align*}
&\frac{\ep}{2}\|\theta_{m}\|_{H}^2 
+ \int_{\Omega} \theta_{m} 
+ \eta h\sum_{n=0}^{m-1}\|\nabla\theta_{n+1}\|_{H}^2 
+ \frac{1}{2}\|\varphi_{m}\|_{H}^2 
+ \frac{1}{2}\|v_{m}\|_{H}^2  
+ \int_{\Omega} \widehat{\beta}(\varphi_{m})
\notag \\ 
&\leq \frac{\ep}{2}\|\theta_{0}\|_{H}^2 
+ \int_{\Omega} \theta_{0} 
+ \frac{1}{2}\|\varphi_{0}\|_{H}^2 
+ \frac{1}{2}\|v_{0}\|_{H}^2  
+ \int_{\Omega} \widehat{\beta}(\varphi_{0}) 
\notag \\ 
&\,\quad + h\sum_{n=0}^{m-1}\int_{\Omega} f_{n+1} \theta_{n+1}  
              - h\sum_{n=0}^{m-1} 
                    (a(\cdot)\varphi_{n} - J\ast \varphi_{n} + \pi(\varphi_{n+1}), v_{n+1})_{H}              
\notag \\ 
&\leq C_{1} 
         + \|\theta_{m}\|_{L^1(\Omega)}
                                        \int_{(m-1)h}^{mh} \|f(s)\|_{L^{\infty}(\Omega)}\,ds 
         + h\sum_{j=0}^{m-1}\|f_{j}\|_{L^{\infty}(\Omega)}\|\theta_{j}\|_{L^1(\Omega)}  
\notag \\ 
&\,\quad+ C_{1}h\sum_{n=0}^{m-1}\|\varphi_{n+1}\|_{H}^2 
         + C_{1}h\sum_{n=0}^{m-1}\|v_{n+1}\|_{H}^2  
\end{align*}
for all $\ep \in (0, 1]$, $h \in (0, h_{0\ep})$ and $m = 1, ..., N$. 
Moreover, 
since by (C4) there exists $h_{1} \in (0, 1)$ such that 
\[
\int_{(m-1)h}^{mh} \|f(s)\|_{L^{\infty}(\Omega)}\,ds \leq \frac{1}{2}
\]
for all $h \in (0, h_{1})$ and $m = 1, ..., N$, 
for all $\ep \in (0, 1]$ there exists $h_{01\ep} \in (0, \min\{h_{1}, h_{0\ep}\})$ such that 
\begin{align*}
&\frac{\ep}{2}\|\theta_{m}\|_{H}^2 
+ \frac{1}{2}\int_{\Omega} \theta_{m}  
+ \eta h\sum_{n=0}^{m-1}\|\nabla\theta_{n+1}\|_{H}^2 
\notag \\ 
&+ \left(\frac{1}{2} - C_{1}h \right)\|\varphi_{m}\|_{H}^2 
+ \left(\frac{1}{2} - C_{1}h \right)\|v_{m}\|_{H}^2  
+ \int_{\Omega} \widehat{\beta}(\varphi_{m})
\notag \\ 
&\leq C_{1} 
         + h\sum_{j=0}^{m-1}\|f_{j}\|_{L^{\infty}(\Omega)}\|\theta_{j}\|_{L^1(\Omega)}   
         + C_{1}h\sum_{j=0}^{m-1}\|\varphi_{j}\|_{H}^2 
         + C_{1}h\sum_{j=0}^{m-1}\|v_{j}\|_{H}^2  
\end{align*}
for all $h \in (0, h_{01\ep})$ and $m = 1, ..., N$. 
Therefore there exists a constant $C_{2} > 0$ such that 
for all $\ep \in (0, 1]$ there exists $h_{1\ep} \in (0, h_{01\ep})$ such that 
\begin{align*}
&\ep\|\theta_{m}\|_{H}^2 
+ \int_{\Omega} \theta_{m}  
+ h\sum_{n=0}^{m-1}\|\nabla\theta_{n+1}\|_{H}^2 
+ \|\varphi_{m}\|_{H}^2 
+ \|v_{m}\|_{H}^2  
+ \int_{\Omega} \widehat{\beta}(\varphi_{m})
\notag \\ 
&\leq C_{2} 
         + C_{2}h\sum_{j=0}^{m-1} \|f_{j}\|_{L^{\infty}(\Omega)}\|\theta_{j}\|_{L^1(\Omega)}   
         + C_{2}h\sum_{j=0}^{m-1}\|\varphi_{j}\|_{H}^2 
         + C_{2}h\sum_{j=0}^{m-1}\|v_{j}\|_{H}^2  
\end{align*}
for all $h \in (0, h_{1\ep})$ and $m = 1, ..., N$, which leads to Lemma \ref{esth1} 
by the discrete Gronwall lemma (see e.g., \cite[Prop.\ 2.2.1]{Jerome}). 
\end{proof}

\begin{lem}\label{esth2}
Let $h_{1\ep}$ be as in Lemma \ref{esth1}. 
Then there exists a constant $C>0$ depending on the data 
such that 
\begin{align*}
\|(\widehat{u}_{h})_{t}\|_{L^{2}(0, T; V^{*})}^2 
\leq C
\end{align*}
for all $\ep \in (0, 1]$ and all $h \in (0, h_{1\ep})$.  
\end{lem}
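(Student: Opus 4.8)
The plan is to read off $(\widehat{u}_{h})_{t}$ directly from the weak form of the first equation in \ref{Ph} and to bound its $V^{*}$-norm pointwise in time by quantities already controlled in Lemma \ref{esth1} together with (C4). The reason for measuring in $V^{*}$ rather than $H$ is that the difference quotient $(\widehat{u}_{h})_{t}=\frac{u_{n+1}-u_{n}}{h}$ need not be bounded in $H$, whereas the equation exhibits it as a sum of an $L^{2}(0,T;H)$ term and the distributional Laplacian $\eta\Delta\overline{\theta}_{h}$, which lives naturally in $V^{*}$. Concretely, testing the first equation of \ref{Ph} by $w\in V$ and using $\partial_{\nu}\overline{\theta}_{h}=0$ together with the identity $\overline{v}_{h}=(\widehat{\varphi}_{h})_{t}$ gives, for a.a.\ $t\in(0,T)$,
\[
\langle (\widehat{u}_{h})_{t}, w \rangle_{V^{*}, V}
= (\overline{f}_{h}, w)_{H} - \ell(\overline{v}_{h}, w)_{H}
- \eta\int_{\Omega} \nabla\overline{\theta}_{h} \cdot \nabla w.
\]
By the Cauchy--Schwarz inequality each term on the right is bounded by $\|w\|_{V}$ times, respectively, $\|\overline{f}_{h}\|_{H}$, $\ell\|\overline{v}_{h}\|_{H}$ and $\eta\|\nabla\overline{\theta}_{h}\|_{H}$, so taking the supremum over $w\in V$ with $\|w\|_{V}\leq 1$ yields $\|(\widehat{u}_{h})_{t}\|_{V^{*}} \leq \|\overline{f}_{h}\|_{H} + \ell\|\overline{v}_{h}\|_{H} + \eta\|\nabla\overline{\theta}_{h}\|_{H}$.

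Next I would square, use $(a+b+c)^{2}\leq 3(a^{2}+b^{2}+c^{2})$, and integrate in time to obtain
\[
\|(\widehat{u}_{h})_{t}\|_{L^{2}(0, T; V^{*})}^{2}
\leq 3\|\overline{f}_{h}\|_{L^{2}(0, T; H)}^{2}
+ 3\ell^{2}\|\overline{v}_{h}\|_{L^{2}(0, T; H)}^{2}
+ 3\eta^{2}\|\nabla\overline{\theta}_{h}\|_{L^{2}(0, T; H)}^{2}.
\]
The last two summands are immediately controlled by Lemma \ref{esth1}: the gradient term is bounded there outright, while $\|\overline{v}_{h}\|_{L^{2}(0,T;H)}^{2}\leq T\|\overline{v}_{h}\|_{L^{\infty}(0,T;H)}^{2}$ is bounded via the $L^{\infty}(0,T;H)$-estimate for $\overline{v}_{h}$. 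All these bounds are uniform in $\ep\in(0,1]$ and $h\in(0,h_{1\ep})$, which is exactly what the statement requires.

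It remains to bound $\|\overline{f}_{h}\|_{L^{2}(0,T;H)}^{2}$ uniformly in $h$, which is the only mildly delicate point. Since $\overline{f}_{h}$ equals the cell average $f_{n+1}=\frac{1}{h}\int_{nh}^{(n+1)h} f(s)\,ds$ on $(nh,(n+1)h]$, Jensen's inequality gives $\|f_{n+1}\|_{H}^{2}\leq\frac{1}{h}\int_{nh}^{(n+1)h}\|f(s)\|_{H}^{2}\,ds$, whence
\[
\|\overline{f}_{h}\|_{L^{2}(0, T; H)}^{2}
= h\sum_{n=0}^{N-1}\|f_{n+1}\|_{H}^{2}
\leq \sum_{n=0}^{N-1}\int_{nh}^{(n+1)h}\|f(s)\|_{H}^{2}\,ds
= \|f\|_{L^{2}(0, T; H)}^{2},
\]
which is finite and independent of $h$ by (C4). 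Combining the three bounds yields the asserted estimate with a constant $C$ depending only on the data. The argument is essentially routine, and the only step needing a little care is this averaging estimate, which guarantees that discretizing the source in time does not inflate its $L^{2}(0,T;H)$-norm.
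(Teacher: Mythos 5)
Your proof is correct and takes essentially the same route the paper intends: the paper's proof of Lemma \ref{esth2} is a one-line remark that the estimate follows from the first equation in \ref{Ph} and Lemma \ref{esth1}, and your argument fills in exactly those details via the duality bound $\|(\widehat{u}_{h})_{t}\|_{V^{*}} \leq \|\overline{f}_{h}\|_{H} + \ell\|\overline{v}_{h}\|_{H} + \eta\|\nabla\overline{\theta}_{h}\|_{H}$ and Lemma \ref{esth1}. Your averaging estimate $\|\overline{f}_{h}\|_{L^{2}(0,T;H)} \leq \|f\|_{L^{2}(0,T;H)}$, justified by Jensen's inequality and (C4), is the right way to handle the discretized source, so the proposal is complete and uniform in $\ep$ and $h$ as required.
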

\begin{proof}
We can prove this lemma by the first equation in \ref{Ph} 
and Lemma \ref{esth1}.  
\end{proof}

\begin{lem}\label{esth3}
Let $h_{1\ep}$ be as in Lemma \ref{esth1}. 
Then there exists a constant $C>0$ depending on the data 
such that 
\begin{align*}
\|\overline{\theta}_{h}\|_{L^{2}(0, T; V)}^2 
\leq C
\end{align*}
for all $\ep \in (0, 1]$ and all $h \in (0, h_{1\ep})$.  
\end{lem}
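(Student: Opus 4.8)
The plan is to read off from Lemma \ref{esth1} everything that is already uniform in $\ep$ and $h$, and to upgrade the spatial $L^1$-control of $\overline{\theta}_h$ to $L^2$-control by a generalized Poincaré inequality. Writing $\|\overline{\theta}_{h}\|_{L^{2}(0, T; V)}^2 = \|\overline{\theta}_{h}\|_{L^{2}(0, T; H)}^2 + \|\nabla\overline{\theta}_{h}\|_{L^{2}(0, T; H)}^2$, I note that the gradient term is already bounded, uniformly in $\ep$ and $h$, by Lemma \ref{esth1}. Hence the whole task reduces to bounding $\|\overline{\theta}_{h}\|_{L^{2}(0, T; H)}^2$ by a constant that does \emph{not} depend on $\ep$.

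The delicate point, and the reason a naive argument fails, is that the only $H$-norm control of $\overline{\theta}_h$ furnished by Lemma \ref{esth1} is the $\ep$-weighted bound $\ep\|\overline{\theta}_h\|_{L^{\infty}(0, T; H)}^2 \le C$, whose usefulness degenerates as $\ep \searrow 0$. This cannot yield the $\ep$-independent estimate required here. Instead I would combine the two pieces of Lemma \ref{esth1} that \emph{are} uniform in $\ep$, namely the gradient bound and the bound $\|\overline{\theta}_h\|_{L^{\infty}(0, T; L^1(\Omega))} \le C$.

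The tool is the generalized Poincaré inequality: there is a constant $C_{P}>0$, depending only on $\Omega$, such that
\[
\|u\|_{H}^2 \le C_{P}\bigl(\|\nabla u\|_{H}^2 + \|u\|_{L^1(\Omega)}^2\bigr) \qquad \mbox{for all}\ u \in V.
\]
This follows from the Poincaré--Wirtinger inequality $\|u - \bar{u}\|_{H} \le C\|\nabla u\|_{H}$, where $\bar{u} := |\Omega|^{-1}\int_{\Omega} u$, together with the elementary estimate $|\bar{u}| \le |\Omega|^{-1}\|u\|_{L^1(\Omega)}$ and the splitting $\|u\|_{H}^2 \le 2\|u-\bar{u}\|_{H}^2 + 2\|\bar{u}\|_{H}^2$. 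I would apply this pointwise in time with $u = \overline{\theta}_{h}(t)$. Crucially, each $\theta_{n+1}$ is the positive solution of the elliptic equation involving $\ln\theta_{n+1}$ produced by Lemma \ref{elliptic1}, so that $\overline{\theta}_{h}(t) > 0$ a.e.\ on $\Omega$; consequently $\|\overline{\theta}_{h}(t)\|_{L^1(\Omega)} = \int_{\Omega}\overline{\theta}_{h}(t)$, which is exactly the quantity controlled by the $L^{\infty}(0, T; L^1(\Omega))$-bound of Lemma \ref{esth1}.

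Putting these together gives, for a.a.\ $t \in (0, T)$,
\[
\|\overline{\theta}_{h}(t)\|_{H}^2
\le C_{P}\|\nabla\overline{\theta}_{h}(t)\|_{H}^2
+ C_{P}\|\overline{\theta}_{h}\|_{L^{\infty}(0, T; L^1(\Omega))}^2,
\]
and integrating over $(0, T)$, then invoking the gradient estimate of Lemma \ref{esth1}, yields the desired $\ep$- and $h$-independent bound on $\|\overline{\theta}_{h}\|_{L^{2}(0, T; H)}^2$, hence on $\|\overline{\theta}_{h}\|_{L^{2}(0, T; V)}^2$. The main obstacle is the conceptual one described above—deliberately discarding the $\ep$-weighted $L^{\infty}(0,T;H)$ estimate and recovering $\ep$-uniform $L^2$-in-space control through the mean value (bounded by the $L^1$-norm, using positivity of $\overline{\theta}_h$) and the gradient. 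Once this is recognized, the remaining computation is routine.
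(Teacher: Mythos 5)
Your proof is correct and is essentially the paper's own argument: the paper proves Lemma \ref{esth3} exactly by combining Lemma \ref{esth1} with the Poincar\'e--Wirtinger inequality, which is precisely your route (gradient bound plus $\ep$-uniform $L^{\infty}(0,T;L^1(\Omega))$ control of the mean, avoiding the degenerate $\ep$-weighted $L^{\infty}(0,T;H)$ estimate). Your write-up merely makes explicit the details the paper leaves to the reader, including the positivity of $\overline{\theta}_h$.
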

\begin{proof}
We can show this lemma by Lemma \ref{esth1} 
and the Poincar\'e--Wirtinger inequality. 
\end{proof}

\begin{lem}\label{esth4}
Let $h_{1\ep}$ be as in Lemma \ref{esth1}. 
Then there exists a constant $C>0$ depending on the data 
such that 
for all $\ep \in (0, 1]$ there exists $h_{2\ep} \in (0, h_{1\ep})$ 
such that 
\begin{align*}
\|\ln \overline{\theta}_{h}\|_{L^{\infty}(0, T; H)}^2 
\leq C
\end{align*}
for all $h \in (0, h_{2\ep})$. 
\end{lem}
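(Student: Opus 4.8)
The plan is to test the first equation in \ref{Pn} by $h\ln\theta_{n+1}$, sum over $n=0,\dots,m-1$, and close the resulting inequality by a discrete Gronwall argument as in Lemma \ref{esth1}. First I would note that, by Theorem \ref{maintheorem3} together with the construction in Lemma \ref{elliptic1}, each $\theta_{n+1}$ belongs to $W$ with $\ln\theta_{n+1}\in H$; hence the pairing $(-\Delta\theta_{n+1},\ln\theta_{n+1})_H$ is meaningful, and Lemma \ref{PT} applied to the single-valued maximal monotone function $\ln$ shows it is nonnegative. Thus, after multiplying by $h$, the diffusion term contributes a nonnegative quantity that can be discarded.

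The central point is the time-difference term. Writing $u_{n+1}-u_n=\ep(\theta_{n+1}-\theta_n)+(\ln\theta_{n+1}-\ln\theta_n)$, I would treat the logarithmic part by the elementary identity
\[
(\ln\theta_{n+1}-\ln\theta_n,\ln\theta_{n+1})_H
=\tfrac12\|\ln\theta_{n+1}\|_H^2-\tfrac12\|\ln\theta_n\|_H^2+\tfrac12\|\ln\theta_{n+1}-\ln\theta_n\|_H^2,
\]
which telescopes upon summation and yields $\tfrac12\|\ln\theta_m\|_H^2$ on the left. For the $\ep$-part, which is exactly the term that must be controlled uniformly in $\ep$, I would exploit convexity of the entropy function $G(s):=s\ln s-s+1$ ($s>0$), which satisfies $G'(s)=\ln s$ and $G\ge 0$: convexity gives pointwise $\ln(\theta_{n+1})(\theta_{n+1}-\theta_n)\ge G(\theta_{n+1})-G(\theta_n)$, so after summation $\ep\sum_{n=0}^{m-1}(\theta_{n+1}-\theta_n,\ln\theta_{n+1})_H\ge \ep\int_\Omega G(\theta_m)-\ep\int_\Omega G(\theta_0)$. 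The first term is nonnegative and is kept on the left, while the second is bounded by $\int_\Omega G(\theta_0)$, which is finite by (C4) (since $\theta_0\in L^2(\Omega)$ and $\ln\theta_0\in L^2(\Omega)$ force $\theta_0\ln\theta_0\in L^1(\Omega)$).

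It then remains to estimate the coupling and source terms. Using $\varphi_{n+1}-\varphi_n=hv_{n+1}$ and Young's inequality, $\ell h\sum(v_{n+1},\ln\theta_{n+1})_H$ is dominated by $C\,h\sum\|v_{n+1}\|_H^2+\tfrac{\ell}{2}h\sum\|\ln\theta_{n+1}\|_H^2$, where the first sum is controlled by Lemma \ref{esth1}; similarly $h\sum(f_{n+1},\ln\theta_{n+1})_H$ is bounded via Jensen's inequality and $f\in L^2(\Omega\times(0,T))$ from (C4), leaving a further harmless $\tfrac12 h\sum\|\ln\theta_{n+1}\|_H^2$. Collecting everything gives $\tfrac12\|\ln\theta_m\|_H^2\le C+C\,h\sum_{n=0}^{m-1}\|\ln\theta_{n+1}\|_H^2$, and the summand with $n=m-1$ on the right is absorbed into the left once $h$ is small (this absorption threshold, intersected with $h_{1\ep}$, determines $h_{2\ep}$). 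The discrete Gronwall lemma then yields $\max_{1\le m\le N}\|\ln\theta_m\|_H^2\le C$ with $C$ independent of $\ep$ and $h$, that is, $\|\ln\overline{\theta}_h\|_{L^\infty(0,T;H)}^2\le C$. The main obstacle is precisely this $\ep$-term: a direct Young estimate fails because $\theta_{n+1}-\theta_n$ is not controlled uniformly, so the convexity/entropy inequality is essential for an $\ep$-independent bound.
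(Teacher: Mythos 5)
Your proposal is correct, but it follows a genuinely different route from the paper's proof. The paper multiplies the first equation of \ref{Pn} by $hu_{n+1}$, where $u_{n+1}=\ep\theta_{n+1}+\ln \theta_{n+1}$, so the time-difference term telescopes as a single perfect square, $\tfrac12\|u_{n+1}\|_{H}^2-\tfrac12\|u_{n}\|_{H}^2+\tfrac12\|u_{n+1}-u_{n}\|_{H}^2$, and no entropy function is needed; the diffusion term is discarded because $(-\Delta\theta_{n+1},\ep\theta_{n+1}+\ln \theta_{n+1})_{H}=\ep\|\nabla\theta_{n+1}\|_{H}^2+(-\Delta\theta_{n+1},\ln \theta_{n+1})_{H}\geq 0$ by Lemma \ref{PT}. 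Young's inequality, Lemma \ref{esth1} and the discrete Gronwall lemma then give $\|\overline{u}_{h}\|_{L^{\infty}(0,T;H)}^2\leq C$ uniformly in $\ep$, and the bound for $\ln \overline{\theta}_{h}=\overline{u}_{h}-\ep\overline{\theta}_{h}$ follows since Lemma \ref{esth1} yields $\|\ep\overline{\theta}_{h}\|_{L^{\infty}(0,T;H)}\leq \ep^{1/2}\bigl(\ep\|\overline{\theta}_{h}\|_{L^{\infty}(0,T;H)}^2\bigr)^{1/2}\leq C^{1/2}$. You instead test with $h\ln \theta_{n+1}$, which produces the target norm directly but leaves the mixed term $\ep(\theta_{n+1}-\theta_{n},\ln \theta_{n+1})_{H}$; your entropy device with $G(s)=s\ln s-s+1$ handles it correctly (convexity gives the telescoping lower bound, $G\geq 0$, and $\int_{\Omega}G(\theta_{0})<+\infty$ does follow from (C4) by the Cauchy--Schwarz inequality), and the remaining steps --- Lemma \ref{PT} for the diffusion term, which is legitimate since $\theta_{n+1}\in W$ with $\ln\theta_{n+1}\in H$, Young plus Lemma \ref{esth1} for the coupling and source terms, absorption of the $n=m-1$ summand for small $h$ with an $\ep$-independent threshold, and discrete Gronwall --- are all sound and consistent with the quantifier structure $h_{2\ep}\in(0,h_{1\ep})$. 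What the paper's test function buys is algebraic economy: everything collapses into one square and no integrability of $\theta_{0}\ln\theta_{0}$ is invoked, at the cost of the final extraction step through the $\ep$-weighted estimate of Lemma \ref{esth1}; your test function buys the $\ln$-bound in one pass and isolates exactly where uniformity in $\ep$ is at stake, at the cost of the extra convexity argument and the (easily verified) hypothesis $\theta_{0}\ln\theta_{0}\in L^{1}(\Omega)$.
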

\begin{proof}
Multiplying the first equation in \ref{Pn} by $hu_{n+1}$ implies that 
\begin{align}\label{g1}
&\frac{1}{2}\|u_{n+1}\|_{H}^2 - \frac{1}{2}\|u_{n}\|_{H}^2 
+ \frac{1}{2}\|u_{n+1} - u_{n}\|_{H}^2 
+ \eta h (-\Delta \theta_{n+1}, \ep\theta_{n+1} + \ln \theta_{n+1})_{H} 
\notag \\ 
&= h(f_{n+1}, u_{n+1})_{H} - \ell h(v_{n+1}, u_{n+1})_{H}.   
\end{align}
Summing \eqref{g1} up $n = 0, ..., m-1$ with $1 \leq m \leq N$, 
using the Young inequality, Lemma \ref{esth1} and the discrete Gronwall lemma, 
we deduce from Lemma \ref{PT} that 
there exists a constant $C_{1} > 0$ such that 
for all $\ep \in (0, 1]$ there exists $h_{2\ep} \in (0, h_{1\ep})$ 
such that 
\[
\|\overline{u}_{h}\|_{L^{\infty}(0, T; H)}^2 \leq C_{1}  
\]
for all $h \in (0, h_{2\ep})$, which yields Lemma \ref{esth4} by Lemma \ref{esth1}. 
\end{proof}

\begin{lem}\label{esth5}
Let $h_{2\ep}$ be as in Lemma \ref{esth4}. 
Then there exists a constant $C>0$ depending on the data 
such that 
\begin{align*}
h \max_{1 \leq m \leq N} \left\| \sum_{n=0}^{m-1} \theta_{n+1} \right\|_{W}
\leq C
\end{align*}
for all $\ep \in (0, 1]$ and all $h \in (0, h_{2\ep})$.  
\end{lem}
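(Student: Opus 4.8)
The plan is to use, after summation in time, the elliptic structure hidden in the first equation of \ref{Pn}, which is exactly the mechanism announced in Remark \ref{Keypoints}. I would first multiply the first equation of \ref{Pn} by $h$ and use $hv_{n+1} = \varphi_{n+1}-\varphi_{n}$ to write it as $u_{n+1} - u_{n} + \ell(\varphi_{n+1}-\varphi_{n}) - \eta h\Delta\theta_{n+1} = hf_{n+1}$. Summing over $n = 0, \dots, m-1$, the first two families of terms telescope and the diffusion terms collect into a single Laplacian; hence, setting $\Theta_{m} := h\sum_{n=0}^{m-1}\theta_{n+1}$, I obtain
\[
-\eta\Delta\Theta_{m} = h\sum_{n=0}^{m-1} f_{n+1} - (u_{m} - u_{0}) - \ell(\varphi_{m} - \varphi_{0}) =: g_{m}
\quad \mbox{in}\ \Omega,
\]
together with $\partial_{\nu}\Theta_{m} = 0$ on $\partial\Omega$, inherited from $\partial_{\nu}\theta_{n+1} = 0$. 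Since each $\theta_{n+1} \in W$ by Theorem \ref{maintheorem3}, also $\Theta_{m} \in W$, so the Neumann elliptic regularity estimate $\|\Theta_{m}\|_{W} \leq C(\|\Delta\Theta_{m}\|_{H} + \|\Theta_{m}\|_{H})$ applies. It therefore suffices to bound $\|g_{m}\|_{H}$ and $\|\Theta_{m}\|_{H}$ uniformly in $\ep$ and $h$.

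For the forcing term I would note that $h\sum_{n=0}^{m-1} f_{n+1} = \int_{0}^{mh} f(s)\,ds$, whence $\|h\sum_{n=0}^{m-1} f_{n+1}\|_{H} \leq \int_{0}^{T}\|f(s)\|_{H}\,ds \leq \sqrt{T}\,\|f\|_{L^2(\Omega\times(0,T))} < +\infty$ by (C4). The increment $u_{m} - u_{0}$ is controlled in $H$ by Lemmas \ref{esth1} and \ref{esth4}: the former gives $\ep\|\theta_{m}\|_{H}\leq\sqrt{C\ep}$ and the latter bounds $\|\ln\theta_{m}\|_{H}$, so $\|u_{m}\|_{H}\leq C$, while $\|u_{0}\|_{H}=\|\ep\theta_{0}+\ln\theta_{0}\|_{H}\leq C$ by (C4). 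Finally $\varphi_{m} - \varphi_{0}$ is controlled by the $L^{\infty}(0,T;H)$-bound on $\overline{\varphi}_{h}$ from Lemma \ref{esth1} and by (C4). Collecting these estimates yields $\|g_{m}\|_{H} \leq C$ uniformly.

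The decisive step, and the main obstacle, is the uniform bound on $\|\Theta_{m}\|_{H}$: the naive estimate $\|\Theta_{m}\|_{H} \leq h\sum_{n=0}^{m-1}\|\theta_{n+1}\|_{H}$ combined only with the $\ep$-dependent bound $\ep\|\theta_{n+1}\|_{H}^2 \leq C$ from Lemma \ref{esth1} would produce a constant blowing up like $\ep^{-1/2}$. Instead I would invoke Lemma \ref{esth3}, which furnishes the $\ep$-uniform bound $h\sum_{n=0}^{N-1}\|\theta_{n+1}\|_{H}^2 \leq \|\overline{\theta}_{h}\|_{L^2(0,T;V)}^2 \leq C$, and combine it with the discrete Cauchy--Schwarz inequality:
\[
\|\Theta_{m}\|_{H} \leq h\sum_{n=0}^{m-1}\|\theta_{n+1}\|_{H}
\leq \Big(\sum_{n=0}^{m-1} h\Big)^{1/2}\Big(h\sum_{n=0}^{m-1}\|\theta_{n+1}\|_{H}^2\Big)^{1/2}
\leq \sqrt{T}\,\sqrt{C}.
\]
This is precisely where estimating the time-integrated temperature $\Theta_{m}$, rather than $\theta$ itself, pays off. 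Inserting $\|g_{m}\|_{H}\leq C$ and $\|\Theta_{m}\|_{H}\leq C$ into the elliptic regularity estimate gives $\max_{1\leq m\leq N}\|\Theta_{m}\|_{W} \leq C$, which is the claim since $h\sum_{n=0}^{m-1}\theta_{n+1} = \Theta_{m}$.
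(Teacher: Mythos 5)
Your proof is correct and follows essentially the same route as the paper's: multiply the first equation of \ref{Pn} by $h$, telescope the sum to isolate $\eta h\Delta\bigl(\sum_{n=0}^{m-1}\theta_{n+1}\bigr)$, bound the resulting right-hand side in $H$ via Lemmas \ref{esth1}, \ref{esth4} and (C4), bound $h\sum_{n=0}^{m-1}\theta_{n+1}$ in $H$ via Lemma \ref{esth3}, and conclude by Neumann elliptic regularity. The only differences are cosmetic: you make explicit the Cauchy--Schwarz-in-time step and the elliptic estimate $\|\Theta_m\|_{W}\leq C(\|\Delta\Theta_m\|_{H}+\|\Theta_m\|_{H})$ that the paper leaves implicit, and your telescoped identity carries the correct terms $\ell(\varphi_{m}-\varphi_{0})$ and the factor $\eta$, which the paper's displayed version of this identity miswrites.
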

\begin{proof}
From the first equation in \ref{Pn} we have 
\begin{align}\label{h1}
u_{n+1} - u_{n} + \ell v_{n+1} - \ell v_{n} - h\Delta \theta_{n+1} = hf_{n+1}. 
\end{align}
We sum \eqref{h1} up $n = 0, ..., m-1$ with $1 \leq m \leq N$ 
to see that 
\begin{align}
u_{m} + \ell v_{m}  - h\Delta \left(\sum_{n=0}^{m-1} \theta_{n+1} \right) 
= u_{0} + \ell v_{0} + h\sum_{n=0}^{m-1} f_{n+1}
\end{align}
and then it follows from Lemmas \ref{esth1} and \ref{esth4} that 
there exists a constant $C_{1} > 0$ such that 
\begin{equation}\label{h2}
h \max_{1 \leq m \leq N} \left\| 
                                     \Delta \left(\sum_{n=0}^{m-1} \theta_{n+1} \right) 
                                  \right\|_{H}
\leq C_{1}
\end{equation}
for all $\ep \in (0, 1]$ and all $h \in (0, h_{2\ep})$.  
On the other hand, owing to Lemma \ref{esth3}, 
there exists a constant $C_{2} > 0$ such that 
\begin{equation}\label{h3}
h \max_{1 \leq m \leq N} \left\|\sum_{n=0}^{m-1} \theta_{n+1} \right\|_{H}
\leq C_{2}
\end{equation}
for all $\ep \in (0, 1]$ and all $h \in (0, h_{2\ep})$. 
Therefore combining \eqref{h2} and \eqref{h3} means that 
there exists a constant $C_{3} > 0$ such that 
\begin{equation*}
h \max_{1 \leq m \leq N} \left\|\sum_{n=0}^{m-1} \theta_{n+1} \right\|_{W}
\leq C_{3}
\end{equation*}
for all $\ep \in (0, 1]$ and all $h \in (0, h_{2\ep})$. 
\end{proof}

\begin{lem}\label{esth6}
Let $h_{2\ep}$ be as in Lemma \ref{esth4}. 
Then there exists a constant $C>0$ depending on the data 
such that 
for all $\ep \in (0, 1]$ 
there exists $h_{3\ep} \in (0, h_{2\ep})$ 
such that 
\begin{align*}
\|\overline{v}_{h}\|_{L^{\infty}(\Omega\times(0, T))}^2 
+ \|\overline{\varphi}_{h}\|_{L^{\infty}(\Omega\times(0, T))}^2   
\leq C
\end{align*}
for all $h \in (0, h_{3\ep})$.
\end{lem}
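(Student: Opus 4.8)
The plan is to work pointwise in $x$ and treat the second equation of \ref{Pn} as a discrete second-order ODE in the time index, exploiting the monotonicity of $\beta$ and the positivity of $\theta$. Fix $x \in \Omega$ and, for $n=0,\dots,m-1$, multiply the pointwise identity
\[
\frac{v_{n+1}-v_{n}}{h}+v_{n+1}+\beta(\varphi_{n+1})+\pi(\varphi_{n+1})+a(\cdot)\varphi_{n}-J\ast\varphi_{n}=\ell\theta_{n+1}
\]
by $hv_{n+1}=\varphi_{n+1}-\varphi_{n}$. Using $(v_{n+1}-v_{n})v_{n+1}\ge\tfrac12(v_{n+1}^2-v_n^2)$, the convexity estimate $\beta(\varphi_{n+1})(\varphi_{n+1}-\varphi_n)\ge\widehat\beta(\varphi_{n+1})-\widehat\beta(\varphi_n)$ coming from $\beta=\partial\widehat\beta$, and discarding the nonnegative term $hv_{n+1}^2$, I obtain the pointwise energy inequality $E_{n+1}-E_{n}\le G_n\,(\varphi_{n+1}-\varphi_n)$, where $E_n:=\tfrac12 v_n^2+\widehat\beta(\varphi_n)$ and $G_n:=\ell\theta_{n+1}-\pi(\varphi_{n+1})-a(\cdot)\varphi_n+J\ast\varphi_n$. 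Summing over $n$ and using $|v_{n+1}|\le\sqrt{2E_{n+1}}$ gives $E_m\le E_0+\sqrt2\sum_{n=0}^{m-1}(h|G_n|)\sqrt{E_{n+1}}$.

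The decisive point, and the main obstacle, is that the forcing $G_n$ contains $\ell\theta_{n+1}$, which is not controlled individually in $L^\infty(\Omega)$. Here I use two facts: first, $\theta_{n+1}>0$ a.e.\ (since $\ln\theta_{n+1}$ is well defined), so that $h\sum_{n=0}^{m-1}|\theta_{n+1}|=h\sum_{n=0}^{m-1}\theta_{n+1}$; second, Lemma \ref{esth5} together with the embedding $W\hookrightarrow L^\infty(\Omega)$ (valid for $d\le3$) bounds this discrete time integral by a constant uniform in $x$, $m$, $\ep$ and $h$. This is precisely the key estimate emphasized in Remark \ref{Keypoints}. Consequently, writing $\Phi_n:=\|\varphi_n\|_{L^\infty(\Omega)}$ and using (C1) and (C3), I get $h\sum_{n=0}^{m-1}|G_n(x)|\le C_1+C_2\,h\sum_{n=0}^{m}\Phi_n$ uniformly in $x$. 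To turn the quadratic inequality for $E_m$ into a linear bound I would first establish a discrete square-root Gronwall lemma: if $e_m^2\le e_0^2+\sum_{n=0}^{m-1}b_n e_{n+1}$ with $b_n\ge0$, then $e_m\le e_0+\sum_{n=0}^{m-1}b_n$ (proved by monitoring $F_m:=e_0^2+\sum_{n<m}b_n e_{n+1}$ and noting $\sqrt{F_{m+1}}-\sqrt{F_m}\le b_m$). Applying it with $e_n=\sqrt{E_n}$ and $b_n=\sqrt2\,h|G_n|$, and using that $E_0$ is bounded via $v_0,\varphi_0\in L^\infty(\Omega)$ and the continuity of $\widehat\beta$, yields $V_m:=\|v_m\|_{L^\infty(\Omega)}\le C_3+C_4\,h\sum_{n=0}^{m}\Phi_n$ uniformly in $x$ and $\ep$.

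It then remains to close the loop between $V_m$ and $\Phi_m$. From $\varphi_m=\varphi_0+h\sum_{n=0}^{m-1}v_{n+1}$ I have $\Phi_m\le\|\varphi_0\|_{L^\infty(\Omega)}+h\sum_{n=1}^{m}V_n$; inserting the bound for $V_n$ and estimating the resulting double sum by $h m\le T$, I reach a single discrete inequality $\Phi_m\le C_5+C_6\,h\sum_{k=0}^{m}\Phi_k$. Absorbing the $k=m$ term into the left-hand side forces the restriction $h<h_{3\ep}$, with $h_{3\ep}\le h_{2\ep}$ chosen so that $C_6 h<\tfrac12$, after which the discrete Gronwall lemma gives $\Phi_m\le C$ and then $V_m\le C$, uniformly in $\ep$ and $h$. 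Recalling that $\overline\varphi_h(t)=\varphi_{n+1}$ and $\overline v_h(t)=v_{n+1}$ on $(nh,(n+1)h]$, this is exactly the asserted bound on $\|\overline\varphi_h\|_{L^\infty(\Omega\times(0,T))}$ and $\|\overline v_h\|_{L^\infty(\Omega\times(0,T))}$. The only genuine difficulty is the temperature forcing; once positivity of $\theta$ and Lemma \ref{esth5} are combined through the square-root Gronwall device, the remainder is a routine two-variable discrete Gronwall argument.
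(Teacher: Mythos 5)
Your proposal is correct, and it hinges on the same decisive point as the paper's proof: testing the second equation of \ref{Pn} pointwise by $hv_{n+1}$, using $\beta=\partial\widehat{\beta}$ to telescope $\widehat{\beta}(\varphi_n)$, and controlling the temperature forcing through the positivity of $\theta_{n+1}$ combined with Lemma \ref{esth5} and the embedding $W\hookrightarrow L^{\infty}(\Omega)$ --- exactly the key estimate of Remark \ref{Keypoints}. The difference lies in how the resulting inequality is closed. The paper keeps everything quadratic: it bounds $\ell h\sum_{n=0}^{m-1}\theta_{n+1}(x)v_{n+1}(x)\le C\|\overline{v}_{h}\|_{L^{\infty}(\Omega\times(0,T))}$ by pulling the global maximum of $|v|$ out of the sum, runs the standard discrete Gronwall lemma on $\tfrac12\|\varphi_m\|_{L^{\infty}(\Omega)}^2+\tfrac12\|v_m\|_{L^{\infty}(\Omega)}^2$, and then removes the leftover linear occurrence of $\|\overline{v}_{h}\|_{L^{\infty}(\Omega\times(0,T))}$ by a final Young absorption. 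You instead bound $|v_{n+1}|\le\sqrt{2E_{n+1}}$ locally, invoke a Bihari-type discrete square-root Gronwall lemma (your proof via $F_m$ is correct; note it requires the hypothesis at every index up to $m$, which your summed energy inequality does provide), and recover $\Phi_m$ from $V_m$ by telescoping $\varphi_m=\varphi_0+h\sum_{n=0}^{m-1}v_{n+1}$ before a two-variable discrete Gronwall. Your route never carries the unknown global supremum of $v$ through the estimate, so no final absorption is needed, at the price of proving the auxiliary square-root lemma; the paper's route uses only the standard discrete Gronwall lemma but must treat $\|\overline{v}_{h}\|_{L^{\infty}(\Omega\times(0,T))}$ as an unknown until the last step. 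Both arguments yield constants independent of $\ep$ and impose the same kind of smallness restriction $h<h_{3\ep}$ coming from absorbing the $n=m$ terms.
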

\begin{proof}
By the Young inequality and the identity $v_{n+1} = \frac{\varphi_{n+1}-\varphi_{n}}{h}$ 
it holds that 
\begin{align}\label{i1}
&\frac{1}{2}|\varphi_{n+1}(x)|^2 - \frac{1}{2}|\varphi_{n}(x)|^2 
+ \frac{1}{2}|\varphi_{n+1}(x) - \varphi_{n}(x)|^2 
\\ \notag 
&= \varphi_{n+1}(x)(\varphi_{n+1}(x) - \varphi_{n}(x))
\\ \notag 
&= h\varphi_{n+1}(x)v_{n+1}(x) 
\\ \notag 
&\leq \frac{1}{2}h\|\varphi_{n+1}\|_{L^{\infty}(\Omega)}^2 
         + \frac{1}{2}h\|v_{n+1}\|_{L^{\infty}(\Omega)}^2.    
\end{align}
Testing the second equation in \ref{Pn} by $hv_{n+1}$ 
and using (C1) yield that 
there exists a constant $C_{1} > 0$ such that 
\begin{align}\label{i2}
&\frac{1}{2}|v_{n+1}(x)|^2 - \frac{1}{2}|v_{n}(x)|^2 
+ \frac{1}{2}|v_{n+1}(x) - v_{n}(x)|^2 
+ \beta(\varphi_{n+1}(x))(\varphi_{n+1}(x) - \varphi_{n}(x)) 
\notag \\ 
& = h\bigl(\ell\theta_{n+1}(x) -a(x)\varphi_{n}(x) + (J\ast\varphi_{n})(x) 
                  + \pi(0) - \pi(\varphi_{n+1}(x)) - \pi(0) \bigr)v_{n+1}(x) 
\notag \\
&\leq \ell h\theta_{n+1}(x)v_{n+1}(x) 
         + C_{1}h\|\varphi_{n}\|_{L^{\infty}(\Omega)}^2 
\notag \\ 
   &\,\quad+ \frac{\|\pi'\|_{L^{\infty}(\mathbb{R})}^2 }{2}h
                    \|\varphi_{n+1}\|_{L^{\infty}(\Omega)}^2                                                                                                 
      + \frac{|\pi(0)|^2}{2}h  
      + \frac{3}{2}h\|v_{n+1}\|_{L^{\infty}(\Omega)}^2 
\end{align}
for all $\ep \in (0, 1]$, $h \in (0, h_{2\ep})$ 
and a.a.\ $x \in \Omega$. 
Here the condition (C2) and the definition of the subdifferential 
imply that 
\begin{align}\label{i3}
\beta(\varphi_{n+1}(x))(\varphi_{n+1}(x) - \varphi_{n}(x)) 
\geq \widehat{\beta}(\varphi_{n+1}(x)) - \widehat{\beta}(\varphi_{n}(x)).  
\end{align}
Thus we derive from \eqref{i1}-\eqref{i3} that 
\begin{align*}
&\frac{1}{2}|\varphi_{m}(x)|^2 + \frac{1}{2}|v_{m}(x)|^2 + \widehat{\beta}(\varphi_{m}(x)) 
\\ \notag 
&\leq \frac{1}{2}\|\varphi_{0}\|_{L^{\infty}(\Omega)}^2 
         + \frac{1}{2}\|v_{0}\|_{L^{\infty}(\Omega)}^2 
         + \|\widehat{\beta}(\varphi_{0})\|_{L^{\infty}(\Omega)} 
\\ \notag 
    &\,\quad+ \ell h\sum_{n=0}^{m-1}\theta_{n+1}(x)v_{n+1}(x)  
         + C_{1}h\sum_{n=0}^{m-1}\|\varphi_{n}\|_{L^{\infty}(\Omega)}^2 
\\ \notag 
   &\,\quad + \frac{\|\pi'\|_{L^{\infty}(\mathbb{R})}^2 + 1}{2}h
                                        \sum_{n=0}^{m-1} \|\varphi_{n+1}\|_{L^{\infty}(\Omega)}^2 
+ 2h\sum_{n=0}^{m-1}\|v_{n+1}\|_{L^{\infty}(\Omega)}^2 
+ \frac{|\pi(0)|^2}{2} T. 
\end{align*}
On the other hand, since $\theta_{j} > 0$ a.e.\ on $\Omega$ for $j = 0, 1, ..., N$, 
it follows from Lemma \ref{esth5} 
and the continuity of the embedding $W \hookrightarrow L^{\infty}(\Omega)$ 
that there exists a constant $C_{2} > 0$ such that 
\begin{align*}
\ell h\sum_{n=0}^{m-1}\theta_{n+1}(x)v_{n+1}(x) 
&\leq \ell h\left(\max_{1 \leq m \leq N}\|v_{m}\|_{L^{\infty}(\Omega)}\right) 
                                 \left\|\sum_{n=0}^{m-1}\theta_{n+1}\right\|_{L^{\infty}(\Omega)} 
\notag \\ 
&\leq C_{2}\max_{1 \leq m \leq N}\|v_{m}\|_{L^{\infty}(\Omega)} 
= C_{2}\|\overline{v}_{h}\|_{L^{\infty}(\Omega\times(0, T))} 
\end{align*}
for all $\ep \in (0, 1]$, $h \in (0, h_{2\ep})$ 
and for a.a.\ $x \in \Omega$, $m = 1, ..., N$. 
Hence there exists a constant $C_{3} > 0$ such that 
\begin{align*}
&\frac{1}{2}|\varphi_{m}(x)|^2 + \frac{1}{2}|v_{m}(x)|^2 + \widehat{\beta}(\varphi_{m}(x)) 
\\ \notag 
&\leq C_{3} + C_{2}\|\overline{v}_{h}\|_{L^{\infty}(\Omega\times(0, T))} 
         + C_{1}h\sum_{n=0}^{m-1}\|\varphi_{n}\|_{L^{\infty}(\Omega)}^2 
\\ \notag 
   &\,\quad + \frac{\|\pi'\|_{L^{\infty}(\mathbb{R})}^2 + 1}{2}h
                                        \sum_{n=0}^{m-1} \|\varphi_{n+1}\|_{L^{\infty}(\Omega)}^2 
+ 2h\sum_{n=0}^{m-1}\|v_{n+1}\|_{L^{\infty}(\Omega)}^2 
\end{align*}
for all $\ep \in (0, 1]$, $h \in (0, h_{2\ep})$ 
and for a.a.\ $x \in \Omega$, $m = 1, ..., N$, and then 
the inequality 
\begin{align*}
&\frac{1}{2}\|\varphi_{m}\|_{L^{\infty}(\Omega)}^2 
+ \frac{1}{2}\|v_{m}\|_{L^{\infty}(\Omega)}^2 
\\ \notag 
&\leq C_{3} + C_{2}\|\overline{v}_{h}\|_{L^{\infty}(\Omega\times(0, T))} 
         + C_{1}h\sum_{n=0}^{m-1}\|\varphi_{n}\|_{L^{\infty}(\Omega)}^2 
\\ \notag 
   &\,\quad + \frac{\|\pi'\|_{L^{\infty}(\mathbb{R})}^2 + 1}{2}h
                                        \sum_{n=0}^{m-1} \|\varphi_{n+1}\|_{L^{\infty}(\Omega)}^2 
+ 2h\sum_{n=0}^{m-1}\|v_{n+1}\|_{L^{\infty}(\Omega)}^2 
\end{align*}
holds. 
Thus we see that 
\begin{align*}
&\frac{1 - (\|\pi'\|_{L^{\infty}(\mathbb{R})}^2 + 1)h}{2}
                                                \|\varphi_{m}\|_{L^{\infty}(\Omega)}^2 
+ \frac{1-4h}{2}\|v_{m}\|_{L^{\infty}(\Omega)}^2 
\\ \notag 
&\leq C_{3} + C_{2}\|\overline{v}_{h}\|_{L^{\infty}(\Omega\times(0, T))} 
         + \frac{2C_{1} + \|\pi'\|_{L^{\infty}(\mathbb{R})}^2 + 1}{2}h
                                        \sum_{j=0}^{m-1} \|\varphi_{j}\|_{L^{\infty}(\Omega)}^2 
\\ \notag 
&\,\quad+ 2h\sum_{j=0}^{m-1}\|v_{j}\|_{L^{\infty}(\Omega)}^2,  
\end{align*}
whence there exists a constant $C_{4} > 0$ such that 
for all $\ep \in (0, 1]$ 
there exists $h_{3\ep} \in (0, h_{2\ep})$ such that 
\begin{align*}
&\|\varphi_{m}\|_{L^{\infty}(\Omega)}^2 + \|v_{m}\|_{L^{\infty}(\Omega)}^2 
\\ \notag 
&\leq C_{4} + C_{4}\|\overline{v}_{h}\|_{L^{\infty}(\Omega\times(0, T))} 
         + C_{4}h\sum_{j=0}^{m-1} \|\varphi_{j}\|_{L^{\infty}(\Omega)}^2 
+ C_{4}h\sum_{j=0}^{m-1}\|v_{j}\|_{L^{\infty}(\Omega)}^2 
\end{align*}
for all $h \in (0, h_{3\ep})$ and $m = 1, ..., N$. 
Thus it follows from the discrete Gronwall lemma that 
there exists a constant $C_{5} > 0$ such that 
\begin{equation*}
\|\varphi_{m}\|_{L^{\infty}(\Omega)}^2 + \|v_{m}\|_{L^{\infty}(\Omega)}^2 
\leq C_{5} + C_{5}\|\overline{v}_{h}\|_{L^{\infty}(\Omega\times(0, T))} 
\end{equation*}
for all $\ep \in (0, 1]$, $h \in (0, h_{3\ep})$ and $m = 1, ..., N$. 
Therefore we have that 
\begin{align*}
\|\overline{\varphi}_{h}\|_{L^{\infty}(\Omega\times(0, T))}^2  
+ \|\overline{v}_{h}\|_{L^{\infty}(\Omega\times(0, T))}^2 
&\leq C_{5} + C_{5}\|\overline{v}_{h}\|_{L^{\infty}(\Omega\times(0, T))} 
\notag \\ 
&\leq C_{5} + \frac{1}{2}\|\overline{v}_{h}\|_{L^{\infty}(\Omega\times(0, T))}^2 
        + \frac{C_{5}^2}{2},  
\end{align*}
which implies that Lemma \ref{esth6} holds. 
\end{proof}

\begin{lem}\label{esth7}
Let $h_{3\ep}$ be as in Lemma \ref{esth6}. 
Then there exists a constant $C>0$ depending on the data 
such that 
\begin{align*}
\|\underline{\varphi}_{h}\|_{L^{\infty}(\Omega\times(0, T))}^2  
\leq C
\end{align*}
for all $\ep \in (0, 1]$ and all $h \in (0, h_{3\ep})$. 
\end{lem}
\begin{proof}
We can obtain this lemma by (C4) and Lemma \ref{esth6}.  
\end{proof}

\begin{lem}\label{esth8}
Let $h_{3\ep}$ be as in Lemma \ref{esth6}. 
Then there exists a constant $C>0$ depending on the data 
such that 
\begin{align*}
\|\overline{z}_{h}\|_{L^{2}(0, T; H)} 
\leq C
\end{align*}
for all $\ep \in (0, 1]$ and all $h \in (0, h_{3\ep})$. 
\end{lem}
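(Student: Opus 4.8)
The plan is to read off $\overline{z}_{h}$ directly from the second equation in \ref{Ph} and then estimate the resulting terms one by one. Solving that equation for $\overline{z}_{h}$ gives
\begin{equation*}
\overline{z}_{h} = \ell\overline{\theta}_{h} - \overline{v}_{h} - a(\cdot)\underline{\varphi}_{h} + J\ast\underline{\varphi}_{h} - \beta(\overline{\varphi}_{h}) - \pi(\overline{\varphi}_{h}) \quad \mbox{a.e.\ on}\ \Omega\times(0, T),
\end{equation*}
so that by the triangle inequality it suffices to bound the $L^2(0, T; H)$-norm of each of the six terms on the right-hand side uniformly in $\ep$ and $h$.

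The temperature contribution $\ell\overline{\theta}_{h}$ is controlled by Lemma \ref{esth3} together with the continuous embedding $V \hookrightarrow H$. The term $\overline{v}_{h}$ is bounded in $L^{\infty}(\Omega\times(0, T))$ by Lemma \ref{esth6}, and since $\Omega\times(0, T)$ has finite measure this yields the required $L^2(0, T; H)$-bound; the same reasoning applies to $\pi(\overline{\varphi}_{h})$, which is bounded in $L^{\infty}(\Omega\times(0, T))$ because $\pi$ is Lipschitz by (C3) and $\overline{\varphi}_{h}$ is bounded in $L^{\infty}(\Omega\times(0, T))$ by Lemma \ref{esth6}. For the two nonlocal terms I would use (C1): the function $a(\cdot)$ is bounded by $\sup_{x\in\Omega}\int_{\Omega}|J(x-y)|\,dy$, and for the convolution one has the pointwise estimate $|(J\ast\underline{\varphi}_{h})(x)| \leq \|\underline{\varphi}_{h}\|_{L^{\infty}(\Omega)}\int_{\Omega}|J(x-y)|\,dy$, so that both $a(\cdot)\underline{\varphi}_{h}$ and $J\ast\underline{\varphi}_{h}$ are bounded in $L^{\infty}(\Omega\times(0, T))$ thanks to the uniform $L^{\infty}$-bound on $\underline{\varphi}_{h}$ from Lemma \ref{esth7}.

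The only term requiring slightly more care is $\beta(\overline{\varphi}_{h})$, and this is precisely where the earlier $L^{\infty}$-estimate pays off. By Lemma \ref{esth6} there is a constant $M > 0$, independent of $\ep$ and $h$, with $\|\overline{\varphi}_{h}\|_{L^{\infty}(\Omega\times(0, T))} \leq M$, so $\overline{\varphi}_{h}$ takes values in the compact interval $[-M, M]$. Since $\beta$ is local Lipschitz continuous by (C2), it is in particular bounded on $[-M, M]$, whence $\|\beta(\overline{\varphi}_{h})\|_{L^{\infty}(\Omega\times(0, T))} \leq \sup_{|r|\leq M}|\beta(r)|$ uniformly in $\ep$ and $h$; the finite measure of $\Omega\times(0, T)$ then converts this into the $L^2(0, T; H)$-bound. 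Collecting the six estimates completes the proof. I expect no genuine obstacle here: the substance of the argument was already carried out in establishing the uniform $L^{\infty}$-bounds of Lemmas \ref{esth6} and \ref{esth7}, and the present lemma amounts to assembling those bounds through the constitutive relation for $\overline{z}_{h}$.
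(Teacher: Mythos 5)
Your proposal is correct and follows exactly the paper's own argument: the paper's proof likewise consists of solving the second equation of \ref{Ph} for $\overline{z}_{h}$ and invoking Lemma \ref{esth3}, the $L^{\infty}$-bounds of Lemmas \ref{esth6} and \ref{esth7}, the (local Lipschitz) continuity of $\beta$, and conditions (C1), (C3). Your write-up merely spells out the term-by-term estimates that the paper leaves implicit.
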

\begin{proof}
Combining the second equation in \ref{Ph}, 
Lemmas \ref{esth3}, \ref{esth6}, \ref{esth7}, 
the continuity of $\beta$ and the condition (C3)  
leads to Lemma \ref{esth8}.    
\end{proof}

\begin{lem}\label{esth9}
Let $h_{3\ep}$ be as in Lemma \ref{esth6}. 
Then there exists a constant $C>0$ depending on the data 
such that 
\begin{align*}
\|\widehat{u}_{h}\|_{H^1(0, T; V^*) \cap L^{\infty}(0, T; H)} 
+ \|\widehat{v}_{h}\|_{H^1(0, T; H) \cap L^{\infty}(\Omega\times(0, T))} 
+ \|\widehat{\varphi}_{h}\|_{W^{1, \infty}(0, T; L^{\infty}(\Omega))} 
\leq C  
\end{align*}
for all $\ep \in (0, 1]$ and all $h \in (0, h_{3\ep})$. 
\end{lem}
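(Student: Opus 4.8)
The plan is to collect the required bounds piece by piece, since each norm on the left-hand side is controlled either by one of the identities \eqref{tool1}--\eqref{tool3} together with the step-function estimates already established, or directly by the definitions of $\overline{z}_{h}$ and $\overline{v}_{h}$ in \ref{Ph}. Because the thresholds are nested, $h_{3\ep} < h_{2\ep} < h_{1\ep} < h_{0\ep}$, every previously cited estimate is valid on the whole range $(0, h_{3\ep})$, so no compatibility issue arises.

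First I would treat $\widehat{u}_{h}$. The estimate $\|(\widehat{u}_{h})_{t}\|_{L^{2}(0, T; V^{*})}^2 \leq C$ is exactly Lemma \ref{esth2}. For the $L^{\infty}(0, T; H)$ part I would invoke the identity \eqref{tool1}, which reduces the bound to controlling $\|u_{0}\|_{H}$ and $\|\overline{u}_{h}\|_{L^{\infty}(0, T; H)}$; the former is finite by (C4) (recall $u_{0} = \ep\theta_{0} + \ln\theta_{0}$), while the latter is bounded uniformly by the estimate $\|\overline{u}_{h}\|_{L^{\infty}(0, T; H)}^2 \leq C_{1}$ obtained in the proof of Lemma \ref{esth4}. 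Since $\Omega$ is bounded we have the continuous embedding $H \hookrightarrow V^{*}$, so the $L^{\infty}(0, T; H)$ bound also yields the $L^2(0, T; V^*)$ bound, completing the control of $\|\widehat{u}_{h}\|_{H^1(0, T; V^*) \cap L^{\infty}(0, T; H)}$.

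Next I would treat $\widehat{v}_{h}$ and $\widehat{\varphi}_{h}$ together. By the third equation in \ref{Ph} we have $(\widehat{v}_{h})_{t} = \overline{z}_{h}$, so Lemma \ref{esth8} gives $\|(\widehat{v}_{h})_{t}\|_{L^{2}(0, T; H)} \leq C$; for the $L^{\infty}(\Omega\times(0, T))$ part I would use the identity \eqref{tool3} together with $v_{0} \in L^{\infty}(\Omega)$ from (C4) and the uniform bound on $\|\overline{v}_{h}\|_{L^{\infty}(\Omega\times(0, T))}$ from Lemma \ref{esth6}, and since $\Omega\times(0, T)$ has finite measure this in turn controls $\|\widehat{v}_{h}\|_{L^{2}(0, T; H)}$. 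Likewise, for $\widehat{\varphi}_{h}$ the identity \eqref{tool2} together with (C4) and Lemma \ref{esth6} bounds $\|\widehat{\varphi}_{h}\|_{L^{\infty}(0, T; L^{\infty}(\Omega))}$, while the third equation in \ref{Ph} gives $(\widehat{\varphi}_{h})_{t} = \overline{v}_{h}$, whose $L^{\infty}(0, T; L^{\infty}(\Omega))$ norm is again bounded by Lemma \ref{esth6}.

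Adding the three groups of bounds yields the claim. This lemma is essentially bookkeeping rather than a new estimate, so there is no genuine obstacle; the only point requiring slight care is that the $L^{\infty}(0, T; H)$ bound on $\widehat{u}_{h}$ rests on the $\overline{u}_{h}$-estimate that appears inside the proof of Lemma \ref{esth4} rather than in its statement, so I would make that dependence explicit to keep the argument self-contained.
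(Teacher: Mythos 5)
Your proof is correct and follows essentially the same route as the paper, whose proof is just a citation of \eqref{tool1}--\eqref{tool3} together with Lemmas \ref{esth1}, \ref{esth2}, \ref{esth4}, \ref{esth6} and \ref{esth8}; your argument simply spells out how those ingredients combine. The only cosmetic difference is that where you invoke the intermediate bound on $\|\overline{u}_{h}\|_{L^{\infty}(0,T;H)}$ from inside the proof of Lemma \ref{esth4}, the paper's citation of Lemmas \ref{esth1} and \ref{esth4} yields the same bound by writing $\overline{u}_{h}=\ep\overline{\theta}_{h}+\ln\overline{\theta}_{h}$ and estimating the two pieces separately.
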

\begin{proof}
we can prove this lemma by \eqref{tool1}-\eqref{tool3}, 
Lemmas \ref{esth1}, \ref{esth2}, \ref{esth4}, \ref{esth6} and \ref{esth8}.  
\end{proof}

\vspace{10pt}

\section{Existence for \ref{Pep}}\label{Sec4}

In this section we will prove existence of weak solutions to \ref{Pep}. 
\begin{lem}\label{CauchyforPh}
Let $h_{3\ep}$ be as in Lemma \ref{esth6}. 
Then there exists a constant $C>0$ depending on the data 
such that 
\begin{align*}
&\|\widehat{\varphi}_{h} - \widehat{\varphi}_{\tau}\|_{C([0, T]; H)} 
+ \|\widehat{v}_{h} - \widehat{v}_{\tau}\|_{C([0, T]; H)} 
+ \|\overline{v}_{h} - \overline{v}_{\tau}\|_{L^2(0, T; H)}
\notag \\ 
&\leq C(h^{1/2} + \tau^{1/2}) 
       + C\|\widehat{v}_{h} - \widehat{v}_{\tau}\|_{L^2(0, T; V^{*})}^{1/2} 
\end{align*}
for all $\ep \in (0, 1]$ and all $h, \tau \in (0, h_{3\ep})$. 
\end{lem}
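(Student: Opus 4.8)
The plan is to work only with the second equation of \ref{Ph}, subtracting the two discretizations with step sizes $h$ and $\tau$ and testing the difference against the velocity difference $\widehat{v}_{h}-\widehat{v}_{\tau}$, which vanishes at $t=0$ because both schemes share the datum $v_{0}$. Recalling $\overline{z}_{h}=(\widehat{v}_{h})_{t}$ and $\overline{v}_{h}=(\widehat{\varphi}_{h})_{t}$, I would pair the difference of the second equations with $\widehat{v}_{h}-\widehat{v}_{\tau}$ in $H$ and integrate over $(0,t)$. The inertial term gives $\tfrac12\|(\widehat{v}_{h}-\widehat{v}_{\tau})(t)\|_{H}^{2}$, and the damping term $\overline{v}_{h}-\overline{v}_{\tau}$, after trading the piecewise constant interpolant for the piecewise linear one, produces a coercive contribution $c\int_{0}^{t}\|\widehat{v}_{h}-\widehat{v}_{\tau}\|_{H}^{2}$ up to an $O((h+\tau)^{2})$ error that is controlled by \eqref{tool6} and Lemma \ref{esth8}.

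For the remaining left-hand terms, the nonlocal contribution is handled by the boundedness of $u\mapsto a(\cdot)u-J\ast u$ on $H$ guaranteed by (C1), after replacing $\underline{\varphi}_{h}$ by $\widehat{\varphi}_{h}$ via \eqref{tool5} and \eqref{tool7} at the price of $O(h+\tau)$; the $\beta$- and $\pi$-terms are controlled by the (local) Lipschitz continuity in (C2)--(C3), which applies since Lemma \ref{esth6} confines $\overline{\varphi}_{h},\overline{\varphi}_{\tau}$ to one fixed bounded set (so $\beta$ is genuinely Lipschitz there). All three are thus bounded by $C\|\widehat{\varphi}_{h}-\widehat{\varphi}_{\tau}\|_{L^{2}(0,t;H)}\|\widehat{v}_{h}-\widehat{v}_{\tau}\|_{L^{2}(0,t;H)}$ plus $O(h+\tau)$ terms, and Young's inequality absorbs the velocity factors. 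Because $(\widehat{\varphi}_{h}-\widehat{\varphi}_{\tau})(t)=\int_{0}^{t}(\overline{v}_{h}-\overline{v}_{\tau})$, the displacement difference is in turn dominated by the velocity difference in $L^{2}(0,t;H)$, which sets up a Gronwall argument.

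The main obstacle is the right-hand side term $\ell\int_{0}^{t}(\overline{\theta}_{h}-\overline{\theta}_{\tau},\widehat{v}_{h}-\widehat{v}_{\tau})_{H}$, since no Cauchy estimate for the temperature is available at this stage. The key point is that the temperature difference need \emph{not} be small: by Lemma \ref{esth3} it is merely \emph{bounded} in $L^{2}(0,T;V)$, so pairing it in the $V^{*}$--$V$ duality against $\widehat{v}_{h}-\widehat{v}_{\tau}\in H\hookrightarrow V^{*}$ gives the \emph{linear} bound $\ell\|\overline{\theta}_{h}-\overline{\theta}_{\tau}\|_{L^{2}(0,T;V)}\|\widehat{v}_{h}-\widehat{v}_{\tau}\|_{L^{2}(0,T;V^{*})}\le C\|\widehat{v}_{h}-\widehat{v}_{\tau}\|_{L^{2}(0,T;V^{*})}$. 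This is precisely the origin of the $V^{*}$-term carrying exponent one on the right; it also explains why the lemma is not a self-contained Cauchy estimate but rather reduces Cauchyness of $(\widehat{\varphi},\widehat{v})$ in the strong norms to Cauchyness of $\widehat{v}$ in the weaker $L^{2}(0,T;V^{*})$-norm, to be recovered afterwards by compactness.

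Collecting everything yields $\|(\widehat{v}_{h}-\widehat{v}_{\tau})(t)\|_{H}^{2}+\|\widehat{v}_{h}-\widehat{v}_{\tau}\|_{L^{2}(0,t;H)}^{2}\le C(h+\tau)^{2}+C\int_{0}^{t}\|\widehat{v}_{h}-\widehat{v}_{\tau}\|_{L^{2}(0,s;H)}^{2}\,ds+C\|\widehat{v}_{h}-\widehat{v}_{\tau}\|_{L^{2}(0,T;V^{*})}$, and the continuous Gronwall lemma removes the integral term. Taking square roots, using $\sqrt{a+b}\le\sqrt a+\sqrt b$ and $h+\tau\le h^{1/2}+\tau^{1/2}$ for $h,\tau\le1$, gives the claimed bound for $\|\widehat{v}_{h}-\widehat{v}_{\tau}\|_{C([0,T];H)}$; the bound for $\|\overline{v}_{h}-\overline{v}_{\tau}\|_{L^{2}(0,T;H)}$ then follows by adding the $O(h+\tau)$ interpolation error of \eqref{tool6}, and the bound for $\|\widehat{\varphi}_{h}-\widehat{\varphi}_{\tau}\|_{C([0,T];H)}$ from $\|\widehat{\varphi}_{h}-\widehat{\varphi}_{\tau}\|_{C([0,T];H)}\le T^{1/2}\|\overline{v}_{h}-\overline{v}_{\tau}\|_{L^{2}(0,T;H)}$.
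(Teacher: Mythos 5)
Your proposal is correct and takes essentially the same route as the paper's proof: the same energy estimate on the difference of the second equations in \ref{Ph}, the same use of the interpolation identities \eqref{tool5}--\eqref{tool7} with Lemmas \ref{esth3}, \ref{esth6} and \ref{esth8} to absorb the $O(h+\tau)$ discrepancies, and, decisively, the same treatment of the temperature term by pairing $\overline{\theta}_{h}-\overline{\theta}_{\tau}$ (merely bounded in $L^{2}(0,T;V)$) against $\widehat{v}_{h}-\widehat{v}_{\tau}$ in the $V^{*}$--$V$ duality as in \eqref{Cauchy6}, which is exactly the origin of the linear $\|\widehat{v}_{h}-\widehat{v}_{\tau}\|_{L^{2}(0,T;V^{*})}$ term and the exponent $1/2$. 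The only deviations are cosmetic: you test directly with $\widehat{v}_{h}-\widehat{v}_{\tau}$ and run Gronwall on the velocity, recovering the $\widehat{\varphi}$- and $\overline{v}$-bounds afterwards, whereas the paper splits through the piecewise-constant interpolants as in \eqref{Cauchy2}--\eqref{Cauchy3} and carries $\|\widehat{\varphi}_{h}(t)-\widehat{\varphi}_{\tau}(t)\|_{H}^{2}$ inside the Gronwall argument via \eqref{Cauchy1}.
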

\begin{proof}
We see from the identity $\overline{v}_{h}(s) = (\widehat{\varphi}_{h})_{s}(s)$ that 
\begin{align}\label{Cauchy1}
\frac{1}{2}\frac{d}{ds}\|\widehat{\varphi}_{h}(s) - \widehat{\varphi}_{\tau}(s)\|_{H}^2 
&= \bigl((\widehat{\varphi}_{h})_{s}(s) - (\widehat{\varphi}_{\tau})_{s}(s), 
                                    \widehat{\varphi}_{h}(s) - \widehat{\varphi}_{\tau}(s)\bigr)_{H} 
\notag \\   
&= (\overline{v}_{h}(s) - \overline{v}_{\tau}(s), 
                                    \widehat{\varphi}_{h}(s) - \widehat{\varphi}_{\tau}(s))_{H}.  
\end{align}
It follows from the identity $\overline{z}_{h}(s) = (\widehat{v}_{h})_{s}(s)$ that 
\begin{align}\label{Cauchy2}
&\frac{1}{2}\frac{d}{ds}\|\widehat{v}_{h}(s) - \widehat{v}_{\tau}(s)\|_{H}^2 
\notag \\  
&= \bigl((\widehat{v}_{h})_{s}(s) - (\widehat{v}_{\tau})_{s}(s), 
                                    \widehat{v}_{h}(s) - \widehat{v}_{\tau}(s)\bigr)_{H} 
\notag \\ 
&= (\overline{z}_{h}(s) - \overline{z}_{\tau}(s),  
                                    \widehat{v}_{h}(s) - \widehat{v}_{\tau}(s))_{H} 
\notag \\  
&= (\overline{z}_{h}(s) - \overline{z}_{\tau}(s), \widehat{v}_{h}(s) -\overline{v}_{h}(s))_{H} 
    + (\overline{z}_{h}(s) - \overline{z}_{\tau}(s), 
                                                     \overline{v}_{\tau}(s) - \widehat{v}_{\tau}(s))_{H} 
\notag \\ 
    &\,\quad+ (\overline{z}_{h}(s) - \overline{z}_{\tau}(s),  
                                              \overline{v}_{h}(s) - \overline{v}_{\tau}(s))_{H}.  
\end{align}
Here the second equation in \ref{Ph} yields that 
\begin{align}\label{Cauchy3}
&(\overline{z}_{h}(s) - \overline{z}_{\tau}(s),  
                                              \overline{v}_{h}(s) - \overline{v}_{\tau}(s))_{H} 
\notag \\ 
&= - \|\overline{v}_{h}(s) - \overline{v}_{\tau}(s)\|_{H}^2 
\notag \\ 
&\,\quad- \bigl(a(\cdot)(\underline{\varphi}_{h}(s) - \underline{\varphi}_{\tau}(s)) 
              - J\ast(\underline{\varphi}_{h}(s) - \underline{\varphi}_{\tau}(s)), 
                                         \overline{v}_{h}(s) - \overline{v}_{\tau}(s) \bigr)_{H} 
\notag \\ 
&\,\quad - \bigl(\beta(\overline{\varphi}_{h}(s)) - \beta(\overline{\varphi}_{\tau}(s)), 
                                                   \overline{v}_{h}(s) - \overline{v}_{\tau}(s)  \bigr)_{H} 
\notag \\
&\,\quad - \bigl(\pi(\overline{\varphi}_{h}(s)) - \pi(\overline{\varphi}_{\tau}(s)), 
                                                   \overline{v}_{h}(s) - \overline{v}_{\tau}(s)  \bigr)_{H} 
\notag \\
&\,\quad+ (\overline{\theta}_{h}(s) - \overline{\theta}_{\tau}(s), 
                                                       \overline{v}_{h}(s) - \overline{v}_{\tau}(s))_{H}. 
\end{align}
We have from \eqref{tool7} that 
\begin{align}\label{Cauchy4}
&\|\underline{\varphi}_{h}(s) - \underline{\varphi}_{\tau}(s)\|_{H}^2 
\notag \\ 
&= \|\overline{\varphi}_{h}(s) - h(\widehat{\varphi}_{h})_{s}(s) 
                   - \overline{\varphi}_{\tau}(s) + \tau(\widehat{\varphi}_{\tau})_{s}(s) \|_{H}^2 
\notag \\ 
& \leq 3\|\overline{\varphi}_{h}(s) - \overline{\varphi}_{\tau}(s) \|_{H}^2  
          + 3h^2\|(\widehat{\varphi}_{h})_{s}(s) \|_{H}^2 
          + 3\tau^2\|(\widehat{\varphi}_{\tau})_{s}(s) \|_{H}^2. 
\end{align}
It holds that 
\begin{align}\label{Cauchy5}
&\|\overline{\varphi}_{h}(s) - \overline{\varphi}_{\tau}(s) \|_{H}^2  
\notag \\
&= \|\overline{\varphi}_{h}(s) - \widehat{\varphi}_{h}(s) 
                                +  \widehat{\varphi}_{h}(s) - \widehat{\varphi}_{\tau}(s)
                                +\widehat{\varphi}_{\tau}(s) - \overline{\varphi}_{\tau}(s) \|_{H}^2 
\notag \\ 
&\leq 3\|\overline{\varphi}_{h}(s) - \widehat{\varphi}_{h}(s)\|_{H}^2 
        + 3\|\widehat{\varphi}_{h}(s) - \widehat{\varphi}_{\tau}(s)\|_{H}^2  
        + 3\|\widehat{\varphi}_{\tau}(s) - \overline{\varphi}_{\tau}(s)\|_{H}^2 
\end{align}
and 
\begin{align}\label{Cauchy6}
&(\overline{\theta}_{h}(s) - \overline{\theta}_{\tau}(s), 
                                                       \overline{v}_{h}(s) - \overline{v}_{\tau}(s))_{H} 
\notag \\ 
&= (\overline{\theta}_{h}(s) - \overline{\theta}_{\tau}(s), 
                                        \overline{v}_{h}(s) - \widehat{v}_{h}(s))_{H} 
   + (\overline{\theta}_{h}(s) - \overline{\theta}_{\tau}(s), 
                                                       \widehat{v}_{\tau}(s) - \overline{v}_{\tau}(s))_{H} 
\notag \\
&\,\quad+ \langle \widehat{v}_{h}(s) - \widehat{v}_{\tau}(s),                                               
                           \overline{\theta}_{h}(s) - \overline{\theta}_{\tau}(s) \rangle_{V^{*}, V}. 
\end{align}
Thus we derive from \eqref{Cauchy1}-\eqref{Cauchy6}, 
the integration over $(0, t)$, where $t \in [0, T]$, 
the Schwarz inequality, the Young inequality, 
(C1), Lemma \ref{esth6}, 
the local Lipschitz continuity of $\beta$, 
(C3), \eqref{tool1}-\eqref{tool3}, \eqref{tool5}, \eqref{tool6}, 
Lemmas \ref{esth3} and \ref{esth8} that 
there exists a constant $C_{1} > 0$ such that 
\begin{align*}
&\|\widehat{\varphi}_{h}(t) - \widehat{\varphi}_{\tau}(t)\|_{H}^2 
+ \|\widehat{v}_{h}(t) - \widehat{v}_{\tau}(t)\|_{H}^2 
\notag \\
&\leq C_{1}\int_{0}^{t}\|\widehat{\varphi}_{h}(s) - \widehat{\varphi}_{\tau}(s)\|_{H}^2\,ds  
       + C_{1}\|\widehat{v}_{h} - \widehat{v}_{\tau}\|_{L^2(0, T; V^{*})} 
\end{align*}
for all $\ep \in (0, 1]$ and all $h, \tau \in (0, h_{3\ep})$. 
Therefore we can obtain Lemma \ref{CauchyforPh} by the Gronwall lemma. 
\end{proof}

\medskip

\begin{prth1.2}
We see from Lemmas \ref{esth1}, \ref{esth3}, \ref{esth4}, \ref{esth6}-\ref{CauchyforPh}, 
the Aubin--Lions lemma for the compact embedding $H \hookrightarrow V^*$,  
the properties \eqref{tool4}-\eqref{tool7} that 
there exist some functions $\theta_{\ep}$, $w_{\ep}$, $\varphi_{\ep}$ such that 
\begin{align*}
    &\theta_{\ep} \in L^2(0, T; V) \cap L^{\infty}(0, T; H),\   
      \ep\theta_{\ep} + w_{\ep} \in H^1(0, T; V^{*}),\  
      w_{\ep} \in L^{\infty}(0, T; H),      \\[1mm]       
    &\varphi_{\ep} \in W^{2, 2}(0, T; H) \cap W^{1, \infty}(0, T; L^{\infty}(\Omega)) 
    \end{align*}
and 
\begin{align}
&\widehat{u}_{h} \to \ep\theta_{\ep} + w_{\ep} 
\quad \mbox{weakly$^*$ in}\ H^1(0, T; V^*) \cap L^{\infty}(0, T; H), 
\label{weakh1} \\[1.5mm] 
&\widehat{u}_{h} \to \ep\theta_{\ep} + w_{\ep} 
\quad \mbox{strongly in}\ C([0, T]; V^*),  
\label{strongh1} \\[1.5mm] 
&\ln \overline{\theta}_{h} \to w_{\ep} 
\quad \mbox{weakly$^*$ in}\ L^{\infty}(0, T; H), 
\label{weakh2} \\[1.5mm] 
&\overline{\theta}_{h} \to \theta_{\ep} 
\quad \mbox{weakly in}\ L^2(0, T; V), 
\label{weakh3} \\[1.5mm] 
&\overline{z}_{h} \to (\varphi_{\ep})_{tt} 
\quad \mbox{weakly in}\ L^2(0, T; H), 
\label{weakh4} \\[1.5mm] 
&\widehat{v}_{h} \to (\varphi_{\ep})_{t} 
\quad \mbox{strongly in}\ L^2(0, T; V^*),  
\notag \\[1.5mm]
&\widehat{v}_{h} \to (\varphi_{\ep})_{t} 
\quad \mbox{strongly in}\ C([0, T]; H), 
\label{strongh2} \\[1.5mm] 
&\overline{v}_{h} \to (\varphi_{\ep})_{t} 
\quad \mbox{weakly$^*$ in}\  L^{\infty}(\Omega\times(0, T)),   
\label{weakh5} \\[1.5mm] 
&\widehat{\varphi}_{h} \to \varphi_{\ep} 
\quad \mbox{weakly$^*$ in}\ W^{1, \infty}(0, T; L^{\infty}(\Omega)), 
\label{weakh6} \\[1.5mm]  
&\widehat{\varphi}_{h} \to \varphi_{\ep} 
\quad \mbox{strongly in}\ C([0, T]; H), 
\label{strongh3} \\[1.5mm]  
&\overline{\varphi}_{h} \to \varphi_{\ep} 
\quad \mbox{weakly$^*$ in}\ L^{\infty}(\Omega\times(0, T)),   
\label{weakh7} \\[1.5mm] 
&\underline{\varphi}_{h} \to \varphi_{\ep} 
\quad \mbox{weakly$^*$ in}\ L^{\infty}(\Omega\times(0, T)),   
\label{weakh8}  
\end{align}
as $h = h_{j} \searrow 0$. 
It follows from \eqref{tool4}, Lemma \ref{esth9} and  \eqref{strongh1} that 
\begin{align}\label{q1}
&\|\overline{u}_{h} - (\ep\theta_{\ep} + w_{\ep})\|_{L^2(0, T; V^*)} 
\notag \\ 
&\leq \|\overline{u}_{h} - \widehat{u}_{h}\|_{L^2(0, T; V^*)} 
       + \|\widehat{u}_{h} - (\ep\theta_{\ep} + w_{\ep})\|_{L^2(0, T; V^*)}  
\notag \\ 
&\leq \frac{h}{\sqrt{3}}\|(\widehat{u}_{h})_{t}\|_{L^2(0, T; V^*)}  
         + T^{1/2}\|\widehat{u}_{h} - (\ep\theta_{\ep} + w_{\ep})\|_{C([0, T]; V^*)}  
\notag \\
&\to 0
\end{align}
as $h = h_{j} \searrow 0$. 
We have from 
the identity $\overline{u}_{h} = \ep\overline{\theta}_{h} + \ln \overline{\theta}_{h}$, 
\eqref{q1}, \eqref{weakh3} 
that 
\begin{align*}
&\limsup_{h_{j} \searrow 0} 
   \int_{0}^{T} (\ln \overline{\theta}_{h}(t), \overline{\theta}_{h}(t))_{H}\,dt 
\notag \\ 
&= \limsup_{h_{j} \searrow 0} 
      \left(
         \int_{0}^{T} \langle \overline{u}_{h}(t), \overline{\theta}_{h}(t) \rangle_{V^*, V}\,dt 
         - \ep\int_{0}^{T} \|\overline{\theta}_{h}(t)\|_{H}^2\,dt   
       \right) 
\notag \\ 
&\leq \limsup_{h_{j} \searrow 0} 
           \int_{0}^{T} \langle \overline{u}_{h}(t), \overline{\theta}_{h}(t) \rangle_{V^*, V}\,dt 
         - \ep\liminf_{h_{j} \searrow 0} \int_{0}^{T} \|\overline{\theta}_{h}(t)\|_{H}^2\,dt   
\notag \\ 
&\leq \int_{0}^{T} \langle \ep\theta_{\ep}(t) + w_{\ep}(t), 
                                                           \theta_{\ep}(t) \rangle_{V^*, V}\,dt 
         - \ep\int_{0}^{T} \|\theta_{\ep}(t)\|_{H}^2\,dt   
\notag \\ 
&= \int_{0}^{T} (w_{\ep}(t), \theta_{\ep}(t))_{H}\,dt,  
\end{align*}
whence it holds that 
\begin{equation}\label{q2}
w_{\ep} = \ln \theta_{\ep}  \quad \mbox{a.e.\ on}\ \Omega\times (0, T)
\end{equation}
(see, e.g., \cite[Lemma 1.3, p.\ 42]{Barbu1}). 
On the other hand, 
we derive from \eqref{tool5}, Lemma \ref{esth6} and \eqref{strongh3} that 
\begin{align}\label{q3}
\|\overline{\varphi}_{h} - \varphi\|_{L^{\infty}(0, T; H)} 
&\leq \|\overline{\varphi}_{h} - \widehat{\varphi}_{h}\|_{L^{\infty}(0, T; H)} 
       + \|\widehat{\varphi}_{h} - \varphi\|_{L^{\infty}(0, T; H)} 
\notag \\ 
&= h\|\overline{v}_{h}\|_{L^{\infty}(0, T; H)} 
         + \|\widehat{\varphi}_{h} - \varphi\|_{C([0, T]; H)} 
\notag \\ 
&\leq |\Omega|^{1/2} h \|\overline{v}_{h}\|_{L^{\infty}(\Omega\times(0, T))} 
         + \|\widehat{\varphi}_{h} - \varphi\|_{C([0, T]; H)} 
\notag \\ 
&\to 0
\end{align}
as $h = h_{j} \searrow 0$. 
Therefore, thanks to \eqref{weakh1}-\eqref{weakh8}, \eqref{q2}, \eqref{q3},  
(C1), Lemma \ref{esth6}, the local Lipschitz continuity of $\beta$, 
and (C3), we can prove existence of weak solutions to \ref{Pep} 
by passing to the limit in \ref{Ph} as $h = h_{j} \searrow 0$.  
\qed 
\end{prth1.2}

\section{Existence for \eqref{P}}\label{Sec5}

In this section we will prove existence of weak solutions to \eqref{P}. 
\begin{lem}\label{estforPep}
There exists a constant $C>0$ depending on the data such that 
\begin{align*}
&\ep^{1/2}\|\theta_{\ep}\|_{L^{\infty}(0, T; H)} 
+ \|\theta_{\ep}\|_{L^2(0, T; V)} 
+ \|(\ep\theta_{\ep} + \ln \theta_{\ep})_{t}\|_{L^2(0, T; V^*)} 
+ \|\ln \theta_{\ep}\|_{L^{\infty}(0, T; H)} 
\notag \\ 
&+ \|\varphi_{\ep}\|_{W^{2, 2}(0, T; H) \cap W^{1, \infty}(0, T; L^{\infty}(\Omega))} 
\leq C 
\end{align*}
for all $\ep \in (0, 1]$. 
\end{lem}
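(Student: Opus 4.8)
The plan is to derive Lemma \ref{estforPep} not by a fresh energy computation but by transferring the uniform estimates of Section \ref{Sec3} to the limit functions $(\theta_\ep, \varphi_\ep)$ constructed in the proof of Theorem \ref{maintheorem2}, using weak and weak-$*$ lower semicontinuity of the norms. The decisive structural fact I would exploit is that in each of Lemmas \ref{esth1}, \ref{esth2}, \ref{esth3}, \ref{esth4}, \ref{esth6}, \ref{esth7} and \ref{esth8} the constant $C$ is independent of \emph{both} $\ep$ and $h$. Hence, for each fixed $\ep$, letting $h = h_j \searrow 0$ along the subsequence along which \eqref{weakh1}--\eqref{weakh8} hold, every limiting quantity is controlled by the same $\ep$-independent constant, which is exactly what the statement requires.

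First I would handle the temperature. From \eqref{weakh3} I have $\overline{\theta}_h \to \theta_\ep$ weakly in $L^2(0, T; V)$, so weak lower semicontinuity together with Lemma \ref{esth3} bounds $\|\theta_\ep\|_{L^2(0, T; V)}$. The estimate $\ep\|\overline{\theta}_h\|_{L^\infty(0, T; H)}^2 \le C$ of Lemma \ref{esth1} shows that $\ep^{1/2}\overline{\theta}_h$ is bounded in $L^\infty(0, T; H)$ uniformly; its weak-$*$ limit must be $\ep^{1/2}\theta_\ep$ by uniqueness of limits (comparing with \eqref{weakh3} in $L^2(0, T; H)$), and weak-$*$ lower semicontinuity then yields $\ep^{1/2}\|\theta_\ep\|_{L^\infty(0, T; H)} \le C^{1/2}$. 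Next, \eqref{weakh1} gives $(\widehat{u}_h)_t \to (\ep\theta_\ep + \ln \theta_\ep)_t$ weakly in $L^2(0, T; V^*)$, where I use \eqref{q2} to identify $w_\ep = \ln\theta_\ep$; combined with Lemma \ref{esth2} this controls $\|(\ep\theta_\ep + \ln \theta_\ep)_t\|_{L^2(0, T; V^*)}$. Finally, \eqref{weakh2} and \eqref{q2} give $\ln \overline{\theta}_h \to \ln\theta_\ep$ weakly-$*$ in $L^\infty(0, T; H)$, so Lemma \ref{esth4} bounds $\|\ln \theta_\ep\|_{L^\infty(0, T; H)}$.

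For the order parameter, the $W^{1, \infty}(0, T; L^\infty(\Omega))$-bound follows from weak-$*$ lower semicontinuity in $L^\infty(\Omega\times(0, T))$: by \eqref{weakh7} and \eqref{weakh5} the limits of $\overline{\varphi}_h$ and $\overline{v}_h$ are $\varphi_\ep$ and $(\varphi_\ep)_t$, and Lemma \ref{esth6} supplies the uniform bound. For the $W^{2, 2}(0, T; H)$-bound, the two lower-order terms are already controlled by the $L^\infty$-estimates of Lemmas \ref{esth6} and \ref{esth7}, while \eqref{weakh4} gives $\overline{z}_h \to (\varphi_\ep)_{tt}$ weakly in $L^2(0, T; H)$, so Lemma \ref{esth8} and lower semicontinuity bound $\|(\varphi_\ep)_{tt}\|_{L^2(0, T; H)}$. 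Adding up the contributions produces the asserted estimate. I do not expect a genuine obstacle here: the analytic work was already carried out at the discrete level, and the only point demanding care is the bookkeeping verification that every inherited constant is $\ep$-independent, so that passing to the limit in $h$ for each fixed $\ep$ preserves uniformity in $\ep$.
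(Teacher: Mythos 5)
Your proposal is correct and follows essentially the same route as the paper, whose proof of Lemma \ref{estforPep} consists precisely in inheriting the $\ep$-independent constants of Lemmas \ref{esth1}--\ref{esth4}, \ref{esth6} and \ref{esth9} (the latter consolidating the bounds you draw from Lemmas \ref{esth7} and \ref{esth8}) through the convergences \eqref{weakh1}--\eqref{weakh8} by lower semicontinuity of the norms. You have merely written out the bookkeeping, including the identification $w_{\ep} = \ln\theta_{\ep}$ via \eqref{q2}, which the paper leaves implicit.
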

\begin{proof}
We can obtain this lemma 
by Lemmas \ref{esth1}-\ref{esth4}, \ref{esth6}, \ref{esth9}. 
\end{proof}

\begin{lem}\label{CauchyforPep}
There exists a constant $C>0$ depending on the data such that 
\begin{align*}
\|\varphi_{\ep} - \varphi_{\gamma}\|_{C([0, T]; H)} 
+ \|v_{\ep} - v_{\gamma}\|_{C([0, T]; H)} 
+ \|v_{\ep} - v_{\gamma}\|_{L^2(0, T; H)}
\leq C\|v_{\ep} - v_{\gamma}\|_{L^2(0, T; V^{*})}^{1/2} 
\end{align*}
for all $\ep, \gamma \in (0, 1]$, where $v_{\ep} = (\varphi_{\ep})_{t}$.   
\end{lem}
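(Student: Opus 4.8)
The plan is to mimic the structure of Lemma \ref{CauchyforPh}, but now directly at the level of the continuous problem \ref{Pep}, exploiting the extra regularity recorded in Lemma \ref{estforPep}. First I would subtract the second equations of \ref{Pep} written for the parameters $\ep$ and $\gamma$; writing $v_{\ep} = (\varphi_{\ep})_{t}$, this gives, a.e.\ on $\Omega\times(0, T)$,
\begin{align*}
&(v_{\ep})_{t} - (v_{\gamma})_{t} + (v_{\ep} - v_{\gamma})
+ a(\cdot)(\varphi_{\ep} - \varphi_{\gamma}) - J\ast(\varphi_{\ep} - \varphi_{\gamma}) \\
&\quad + \beta(\varphi_{\ep}) - \beta(\varphi_{\gamma})
+ \pi(\varphi_{\ep}) - \pi(\varphi_{\gamma})
= \ell(\theta_{\ep} - \theta_{\gamma}).
\end{align*}
Testing this identity by $v_{\ep} - v_{\gamma}$ in $H$ (legitimate since $\varphi_{\ep}\in W^{2,2}(0,T;H)$) and combining it with the elementary identity $\tfrac12\tfrac{d}{ds}\|\varphi_{\ep} - \varphi_{\gamma}\|_{H}^2 = (v_{\ep} - v_{\gamma}, \varphi_{\ep} - \varphi_{\gamma})_{H}$, I would obtain an energy balance whose leading terms are $\tfrac12\tfrac{d}{ds}\bigl(\|\varphi_{\ep} - \varphi_{\gamma}\|_{H}^2 + \|v_{\ep} - v_{\gamma}\|_{H}^2\bigr)$ together with a good $\|v_{\ep} - v_{\gamma}\|_{H}^2$ term; the latter will eventually supply the $L^2(0, T; H)$-norm of $v_{\ep} - v_{\gamma}$ on the left-hand side.

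Next I would dispose of the zeroth-order terms. Since Lemma \ref{esth6} provides a uniform $L^{\infty}(\Omega\times(0, T))$-bound for $\varphi_{\ep}$, the function $\beta$ is Lipschitz on the relevant range, so $\|\beta(\varphi_{\ep}) - \beta(\varphi_{\gamma})\|_{H} \le C\|\varphi_{\ep} - \varphi_{\gamma}\|_{H}$; the same holds for $\pi$ by (C3), and (C1) with Young's convolution inequality gives $\|a(\cdot)(\varphi_{\ep} - \varphi_{\gamma}) - J\ast(\varphi_{\ep} - \varphi_{\gamma})\|_{H} \le C\|\varphi_{\ep} - \varphi_{\gamma}\|_{H}$. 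Pairing each of these with $v_{\ep} - v_{\gamma}$ and applying the Young inequality with a small parameter, every resulting $\|v_{\ep} - v_{\gamma}\|_{H}^2$ contribution can be absorbed into the good term on the left, while only $\|\varphi_{\ep} - \varphi_{\gamma}\|_{H}^2$ survives for a later Gronwall argument. The cross term $(v_{\ep} - v_{\gamma}, \varphi_{\ep} - \varphi_{\gamma})_{H}$ is treated identically.

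The hard part will be the coupling term $\ell(\theta_{\ep} - \theta_{\gamma}, v_{\ep} - v_{\gamma})_{H}$, because no uniform strong bound on $\theta_{\ep} - \theta_{\gamma}$ is available and the term cannot simply be absorbed. The key idea, in the spirit of Remark \ref{Keypoints}, is to reinterpret it through the Gelfand triple $V \hookrightarrow H \hookrightarrow V^*$ as the duality pairing $\ell\langle v_{\ep} - v_{\gamma}, \theta_{\ep} - \theta_{\gamma}\rangle_{V^*, V}$, so that after integrating in time and using the Schwarz inequality one gets $\ell\int_{0}^{t}(\theta_{\ep} - \theta_{\gamma}, v_{\ep} - v_{\gamma})_{H}\,ds \le \ell\|v_{\ep} - v_{\gamma}\|_{L^2(0, T; V^*)}\|\theta_{\ep} - \theta_{\gamma}\|_{L^2(0, T; V)}$, where the last factor is uniformly bounded by Lemma \ref{estforPep}. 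This is precisely what produces the weak $V^*$-norm of $v_{\ep} - v_{\gamma}$ on the right-hand side. Finally, integrating the energy balance over $(0, t)$, noting that all initial differences vanish because $\varphi_{\ep}(0) = \varphi_0 = \varphi_{\gamma}(0)$ and $v_{\ep}(0) = v_0 = v_{\gamma}(0)$, and invoking the Gronwall lemma, I would arrive at
\begin{align*}
\|\varphi_{\ep} - \varphi_{\gamma}\|_{C([0, T]; H)}^2
+ \|v_{\ep} - v_{\gamma}\|_{C([0, T]; H)}^2
+ \|v_{\ep} - v_{\gamma}\|_{L^2(0, T; H)}^2
\le C\|v_{\ep} - v_{\gamma}\|_{L^2(0, T; V^*)},
\end{align*}
with $C$ depending only on the data and $T$. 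Taking square roots and summing the three contributions then yields the asserted Cauchy estimate.
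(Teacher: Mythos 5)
Your proposal is correct and is essentially the paper's own argument: the paper proves Lemma \ref{CauchyforPep} by saying only that it follows ``in a similar way to the proof of Lemma \ref{CauchyforPh}'', and your continuous-level version---testing the difference of the second equations by $v_{\ep}-v_{\gamma}$, absorbing the nonlocal, $\beta$ and $\pi$ terms via Lipschitz bounds on the uniformly bounded range, rewriting $\ell(\theta_{\ep}-\theta_{\gamma}, v_{\ep}-v_{\gamma})_{H}$ as a $V^{*}$--$V$ duality pairing estimated by the uniform $L^{2}(0,T;V)$ bound on $\theta_{\ep}$, and concluding with Gronwall---is exactly that adaptation, indeed simpler since the interpolant-discrepancy terms producing $h^{1/2}+\tau^{1/2}$ in Lemma \ref{CauchyforPh} are absent here. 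One cosmetic remark: the uniform $L^{\infty}(\Omega\times(0,T))$ bound on $\varphi_{\ep}$ used for the local Lipschitz estimate on $\beta$ should be cited at this level from Lemma \ref{estforPep} (which transfers Lemma \ref{esth6} through the limit $h \searrow 0$) rather than from Lemma \ref{esth6} directly.
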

\begin{proof}
We can show this lemma 
in a similar way to the proof of Lemma \ref{CauchyforPh}. 
\end{proof}

\begin{prth1.1}
Owing to Lemmas \ref{estforPep} and \ref{CauchyforPep}, 
we can establish existence of weaks solutions to \eqref{P} 
in a similar way to the proof of Theorem \ref{maintheorem2}. 
\qed
\end{prth1.1}

 
%
%
%

\end{document}